\newcommand{\Nat}{{\mathbb N}}
\newcommand{\TII}{\xrightarrow{ii}}
\newcommand{\Dom}{\xrightarrow{d}}
\newcommand{\bhom}{\xrightarrow{b}}
\newcommand{\Hom}{\rightarrow}
\newcommand{\fa}{f_1}
\newcommand{\fb}{f_2}
\begin{document}

\title{Fall-colorings and b-colorings of graph products
\thanks{This work was partially supported by CNPq, Brazil.}
}

\author{Ana Silva}


\institute{ParGO Research Group - Parallelism, Graphs and Optimization. 
            Departamento de Matem\'atica, Universidade Federal do Cear\'a, Brazil. 
            \email{ana.silva@mat.ufc.br}
}

\date{Received: date / Accepted: date}

\maketitle

\begin{abstract}
Given a proper coloring $f$ of a graph $G$, a b-vertex in $f$ is a vertex that is adjacent to every color class but its own. It is a b-coloring if every color class contains at least one b-vertex, and it is a fall-coloring if every vertex is a b-vertex. The b-chromatic number of $G$ is the maximum integer $b(G)$ for which $G$ has a b-coloring with $b(G)$ colors, while the fall-chromatic number and the fall-acromatic number of $G$ are, respectively, the minimum and maximum integers $\fa(G),\fb(G)$ for which $G$ has a fall-coloring. In this article, we explore the concepts of b-homomorphisms and Type II homomorphisms, which generalize the concepts of b-colorings and fall-colorings, and present some meta-theorems concerning products of graphs. As a result, we derive some previously known facts about these metrics on graph products. We also give a negative answer to a question posed by Kaul and Mitillos about fall-colorings of perfect graphs.

\keywords{b-chromatic number\and fall-chromatic number \and fall-achromatic number\and graph products \and homomorphisms \and Hedetniemi's Conjecture}
\end{abstract}
 
\section{Introduction}\label{intro}

Given a simple graph $G$\footnote{The graph terminology used in this paper follows \cite{BM08}.}, and a function $f: V(G)\rightarrow\{1, \cdots, k\}$, we say that $f$ is a \emph{proper coloring of $G$ with $k$ colors} if $f(u) \neq f(v)$ for every $uv \in E(G)$. The \emph{chromatic number of $G$} is the minimum value $k$ for which $G$ has a proper coloring with $k$ colors; it is denoted by $\chi(G)$. The related decision problem is one of the Karp's~21 NP-complete problems~\cite{K.72}, and it continues to be NP-complete even if $k$ is considered to be fixed~\cite{Hol81}. The chromatic number is also hard to approximate: for all $\epsilon > 0$, there is no algorithm that approximates the chromatic number within a factor of $n^{1 - \epsilon}$ unless P = NP~\cite{Has96,Zuc07}.

The graph coloring problem and its variants are perhaps the most studied problems in graph theory, in part due to its wide range of applications in practice.
For instance, problems of \emph{scheduling}~\cite{Werra.85}, \emph{frequency assignment}~\cite{Gamst.86}, \emph{register allocation}~\cite{Chow.Hennessy.84,Chow.Hennessy.90}, and the \emph{finite element method}~\cite{Saad.96}, are naturally modelled by colourings.

Given its difficulty, one approach to obtain proper colorings of a graph is to use coloring heuristics.
Consider a proper coloring $f$ of graph $G$ that uses $k$ colors. A value $i$ in $\{1,\cdots,k\}$ is called \emph{color $i$}, and the set of vertices $f^{-1}(i)$ is called \emph{color class $i$}. 
A vertex $v$ in color class $i$ is called a \emph{b-vertex of color $i$} if $v$ has at least one neighbor in color class $j$, for every $j\in\{1,\cdots,k\}$, $j\neq i$.
If color $i$ has no $b$-vertices, we may recolor each $v$ in color class $i$ with some color that does not appear in the neighborhood of $v$.
In this way, we eliminate color $i$, and obtain a new proper coloring of $G$ that uses $k - 1$ colors.
The procedure may be repeated until we reach a coloring such that every color class contains a $b$-vertex.
Such a coloring is called a \emph{$b$-coloring}. Clearly, if $k=\chi(G)$, then the described procedure cannot decrease the number of colors used in $f$. This means that every optimal coloring of $G$ is also a b-coloring and this is why we are only interested in investigating the worst-case scenario for the described procedure. 
The \emph{$b$-chromatic number} of a graph $G$, denoted  by $b(G)$, is the largest $k$ such that $G$ has a $b$-coloring with $k$ colors.
This concept was introduced by Irving and Manlove in~\cite{IM99}, where they prove that  determining the $b$-chromatic number of a graph is an NP-complete problem. In fact, it remains so even when restricted to bipartite graphs~\cite{KTV.02}, connected chordal graphs~\cite{HLS11}, and line graphs~\cite{CLMSSS15}.

A related type of coloring is the fall-coloring. A proper coloring $f$ of $G$ is called a \emph{fall-coloring of $G$} if every vertex of $G$ is a b-vertex in $f$. Unlike the b-colorings, some graph may not have a fall-coloring. For instance, if $\delta(G)$ denotes the minimum degree of a vertex in $G$ and $\chi(G) > \delta(G)+1$, then no vertex with minimum degree can be a b-vertex; hence $G$ does not have a fall-coloring. Also, even if $G$ does admit a fall-coloring, it is not necessarily true that it admits a fall-coloring with $\chi(G)$ colors. Therefore, we define the \emph{fall-spectrum} of $G$ as being the set ${\cal F}(G)$ containing every $k$ for which $G$ admits a fall-coloring with $k$ colors. If ${\cal F}(G)\neq \emptyset$, then the \emph{fall-chromatic number of $G$} is the minimum value $\fa(G)$ in ${\cal F}(G)$, while the \emph{fall-achromatic number of $G$} is the maximum value $\fb(G)$ in ${\cal F}(G)$. This concept was introduced in~\cite{DHHJKLR.00}, where they also show that deciding whether ${\cal F}(G)\neq \emptyset$ is NP-complete. We mention that some authors have used $\chi_f(G),\psi_f(G)$ to denote $\fa(G),\fb(G)$, respectively, which we do not adopt here since $\chi_f(G)$ is more largely used to denote the fractional chromatic number of $G$.  Observe that, if ${\cal F}(G)\neq \emptyset$, then:
\[\chi(G)\le \fa(G) \le \fb(G)\le \delta(G)+1\]

A concept related to b-colorings that is analogous to the fall-spectrum is that of the b-spectrum. In~\cite{KTV.02} it is proved that $K'_{p, p}$, the graph obtained from $K_{p,p}$ by removing a perfect matching, admits $b$-colorings only with $2$ or $p$ colors.
And in~\cite{BCF07}, the authors prove that, for every finite $S\subset \Nat-\{1\}$, there exists a graph $G$ that admits a b-coloring with $k$ colors if and only if $k\in S$.
Motivated by these facts, in~\cite{BCF03} the authors define the \emph{b-spectrum} of a graph $G$ as the set containing every positive value $k$ for which $G$ admits a b-coloring with $k$ colors; this is denoted by $S_b(G)$. Also, they say that $G$ is \emph{b-continuous} if $S_b(G)$ contains every integer in the closed interval $[\chi(G),b(G)]$. 

It is well known that graph homomorphisms generalize proper colorings. Given graphs $G$ and $H$, a function $f:V(G)\rightarrow V(H)$ is a \emph{homomorphism} if every edge of $G$ is mapped into an edge of $H$, i.e., if $f(u)f(v)\in E(H)$, for every $uv\in E(G)$. If such a function exists, we write $G\Hom H$. One can easily verify that $G\Hom K_n$ if and only if $\chi(G)\le n$. In fact, this is a very rich subject that has been largely studied. We direct the interested reader to~\cite{HN.book}.

Recently, special types of homomorphisms that generalize b-colorings and fall-colorings have also been independently used in the study of the b-continuity of graphs and of certain products of graphs~\cite{LL.09,SLS.16,S.15}. 
In~\cite{LL.09}, the authors prove that the existence of a Type II homomorphism, which generalizes fall-colorings, is a transitive relation, and use the concept to investigate the fall-colorings of the cartesian products of graphs. 
Similarly, in~\cite{S.15}, the author prove that the existence of a semi-locally surjective homomorphism, which generalizes b-colorings, is a transitive relation and use the concept to prove the b-continuity of certain Kneser graphs. 
We mention that semi-locally surjective homomorphisms were studied independently in~\cite{SLS.16}, where the the concept is used to investigate the b-colorings of the lexicographic products of graphs; there, the authors use the term b-homomorphisms, which we give preference because of its brevity. 

In this article, we show that these results can actually be produced for the main existing products of graphs. For this, we generalize the concept of a graph product and present our results in the form of meta-theorems. In particular, the theorems below follow directly from these meta-theorems and some easy observations regarding these products, which we will see in Section~\ref{sec:products}. There, the reader can find Table~\ref{tab:products}, which contains the formal definition of each of the products in the theorems below. We mention that, in addition to generalizing results presented in the previously cited articles, our theorems also generalize results presented in~\cite{JP.15,KM.02,KTV.02,S.15_2}. Although our proof need some heavy notation, it has the advantage of proving all of these results at once.

\begin{theorem}\label{theo:mainb}
Let $G,H$ be graphs, and $\odot$ denote a graph product. Then, 
\begin{itemize}
\item If $\odot$ is either the lexicographic product, or the strong product, or the co-normal product, then
\[b(G\odot H) \ge b(G)b(H);\]
\item And if $\odot$ is the cartesian product or the direct product, then
\[b(G\odot H)\ge \max\{b(G), b(H)\}.\]
\end{itemize}
\end{theorem}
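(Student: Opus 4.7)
The plan is, in each case, to construct an explicit b-coloring of $G\odot H$ from optimal b-colorings $f_G:V(G)\to \{1,\dots,k\}$ and $f_H:V(H)\to \{1,\dots,\ell\}$ of $G$ and $H$, where $k=b(G)$ and $\ell=b(H)$. In every case properness is routine from the edge definitions of the product, so the main verification is the b-vertex property together with surjectivity onto all claimed colors.

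For the lexicographic, strong, and co-normal products, I would use the pair coloring $f(u,v)=(f_G(u),f_H(v))$ with $k\ell$ colors. In each of these products, every edge $(u,v)(u',v')$ forces $f_G(u)\neq f_G(u')$ or $f_H(v)\neq f_H(v')$, so $f$ is proper, and since $f_G,f_H$ realize all of their colors, so does $f$. For the b-vertex property of a color class $(i,j)$, pick b-vertices $u$ and $v$ of colors $i$ and $j$ in $f_G$ and $f_H$ respectively; I claim $(u,v)$ is a b-vertex of color $(i,j)$. Given a target color $(i',j')\neq (i,j)$, a three-case split on whether $i'=i$ and whether $j'=j$ produces a neighbor in color class $(i',j')$, using the specific adjacency rule of the product. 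For example, in the co-normal product and case $i'=i$, $j'\neq j$, taking $u'=u$ and a neighbor $v'$ of $v$ in $H$ with $f_H(v')=j'$ gives an edge via the $vv'\in E(H)$ clause; the lexicographic and strong cases are handled analogously using the $u=u'$, $vv'\in E(H)$ edge type.

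For the direct product, assuming both $G$ and $H$ have at least one edge (otherwise $G\times H$ is edgeless and the claim is trivial), and WLOG $b(G)\ge b(H)$, I would use the projection coloring $f(u,v)=f_G(u)$ with $k$ colors. It is proper since every edge of $G\times H$ requires $uu'\in E(G)$. For the b-vertex property of color $i$, pick a b-vertex $u$ of color $i$ in $f_G$ and any $v\in V(H)$ with a neighbor; for each $j\neq i$, combining a neighbor $u'$ of $u$ with $f_G(u')=j$ and any neighbor $v'$ of $v$ in $H$ yields the required edge. For the cartesian product, I would use the shifted coloring $f(u,v)=f_G(u)+c(v)\bmod k$, where $c$ is any proper $k$-coloring of $H$; such a $c$ exists because $\chi(H)\le b(H)\le k$. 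Properness on each of the two cartesian edge types is immediate. For the b-vertex property, for any b-vertex $u$ of $f_G$ and any $v\in V(H)$, the pair $(u,v)$ is a b-vertex of color $f_G(u)+c(v)\bmod k$: given a distinct target color $a$, the neighbor of $u$ in $G$ with $f_G$-color $a-c(v)\bmod k$ (which exists because $a-c(v)\not\equiv f_G(u)\bmod k$) provides an adjacent vertex of the desired color via a type-1 cartesian edge.

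The step I expect to be the main obstacle is the cartesian case, where the modular shift must be reconciled both with properness across the two edge types and with surjectivity onto all $k$ color classes; both hinge on the chain $\chi(H)\le b(H)\le k$ guaranteeing the existence of the auxiliary coloring $c$. The other products reduce to case analysis once the right coloring is written down.
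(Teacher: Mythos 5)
Your proof is correct, but it takes a genuinely different route from the paper's. The paper derives Theorem~\ref{theo:mainb} from the b-homomorphism machinery: since $G\bhom K_{b(G)}$ and $H\bhom K_{b(H)}$, Lemma~\ref{lem:bhom} gives $G\odot H\bhom K_{b(G)}\odot K_{b(H)}$, and the whole problem reduces to understanding b-colorings of products of complete graphs (Propositions~\ref{prop:Lex} and~\ref{prop:Cart}, Theorem~\ref{theo:Tensor}, and Corollaries~\ref{cor:Lex}, \ref{cor:Cart} and~\ref{cor:Direct}). You instead build explicit b-colorings of $G\odot H$ directly: the pair coloring for the lexicographic, strong and co-normal products, the projection coloring for the direct product, and the modular shift $f_G(u)+c(v)\bmod k$ for the cartesian product. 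All four constructions check out (the cartesian shift is essentially the Latin-square coloring of $K_p\oblong K_q$ from Proposition~\ref{prop:Cart}, transported through the homomorphism and then inlined, and the hypothesis $\chi(H)\le b(H)\le k$ you isolate is exactly the condition appearing in Corollary~\ref{cor:Cart}). Your argument is more elementary and self-contained; in particular, for the direct product it needs only the easy existence of the projection coloring, not the full characterization $S_b(K_p\times K_q)=\{p,q\}$ of Theorem~\ref{theo:Tensor}. What the paper's route buys in exchange is the stronger spectrum containments of Corollary~\ref{cor:bspectrum} (hence information about $S_b(G\odot H)$ beyond its maximum) and a single framework that treats fall-colorings in parallel. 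One shared caveat: for the direct product the stated inequality actually fails when exactly one of $G,H$ is edgeless, since then $G\times H$ is edgeless and $b(G\times H)=1<\max\{b(G),b(H)\}$; you flag the edgeless case but dismiss it as ``trivial'' rather than excluding it, and the paper's Theorem~\ref{theo:Tensor} likewise assumes $p,q\ge 2$ --- so this is a boundary defect of the statement itself rather than a gap introduced by your argument.
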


\begin{theorem}\label{theo:mainfall}
Let $G,H$ be graphs, and $\odot$ denote a graph product. If ${\cal F}(G)\neq \emptyset$ and ${\cal F}(H)\neq \emptyset$, then ${\cal F}(G\odot H)\neq \emptyset$. Also,
\begin{itemize}
\item If $\odot$ is either the lexicographic product, or the strong product, or the co-normal product, then
\[\fa(G\odot H)\le \fa(G)\fa(H)\le \fb(G)\fb(H)\le \fb(G\odot H);\]
\item If $\odot$ is the cartesian product, then
\[\fa(G\odot H)\le \max\{\fa(G),\fa(H)\}\le \max\{\fb(G),\fb(H)\}\le \fb(G\odot H);\]
\item And if $\odot$ is the direct product, then
\[\fa(G\odot H)\le \min\{\fa(G),\fa(H)\}\le \max\{\fb(G),\fb(H)\}\le \fb(G\odot H).\]
\end{itemize}
\end{theorem}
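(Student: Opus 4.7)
The plan is to reduce every inequality to the existence of an appropriate Type II homomorphism, using the fact that a fall-coloring of $G$ with $k$ colors is exactly a homomorphism $G\TII K_k$, together with the transitivity of $\TII$ mentioned in the excerpt. The non-emptiness of ${\cal F}(G\odot H)$ is then a byproduct of any such construction. For each product, the construction applied to fall-colorings of $G,H$ using $\fa(G),\fa(H)$ colors yields the upper bound on $\fa(G\odot H)$; the same construction with $\fb(G),\fb(H)$ colors yields the lower bound on $\fb(G\odot H)$; and the middle inequality in each case reduces to $\fa\le \fb$ applied on each factor.

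For the lexicographic, strong, and co-normal products, the crucial identity is $K_a\odot K_b = K_{ab}$. I would establish the meta-theorem that, whenever $f_G\colon G\TII G'$ and $f_H\colon H\TII H'$, the componentwise map $(u,v)\mapsto (f_G(u),f_H(v))$ is a Type II homomorphism $G\odot H\TII G'\odot H'$. Applying this with $G'=K_{\fa(G)}$ and $H'=K_{\fa(H)}$, and composing with the identification $K_{\fa(G)}\odot K_{\fa(H)} = K_{\fa(G)\fa(H)}$, produces a fall-coloring of $G\odot H$ with $\fa(G)\fa(H)$ colors; the symmetric argument with $\fb$ yields $\fb(G\odot H)\ge \fb(G)\fb(H)$. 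This is where I expect the principal technical work to lie, since the edge structure of $G\odot H$ differs among the three products and the local surjectivity of $f_G$ and $f_H$ must be invoked separately to handle each edge type.

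For the cartesian product, componentwise is insufficient since $K_a\square K_b$ is not complete; instead, assuming $a\le b$, I would color $(u,v)$ by $f_G(u)+f_H(v)\pmod{b}$, where $f_G,f_H$ are fall-colorings of $G,H$ with $a,b$ colors respectively. Both edge types of $G\square H$ induce color differences that are nonzero mod $b$, so the coloring is proper; and for the fall property at $(u_0,v_0)$ and any target color $c'\neq f_G(u_0)+f_H(v_0)\pmod{b}$, the fall-coloring $f_H$ furnishes a neighbor $v$ of $v_0$ in $H$ with $f_H(v)\equiv c'-f_G(u_0)\pmod{b}$, giving the required neighbor $(u_0,v)$ of color $c'$. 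For the direct product, the projection $(u,v)\mapsto u$ is itself a Type II homomorphism $G\times H\TII G$ whenever $H$ has an edge, because any neighbor of $u$ in $G$ combined with any neighbor of $v$ in $H$ yields a neighbor of $(u,v)$ in $G\times H$; composing with $f_G\colon G\TII K_{\fa(G)}$ produces a fall-coloring of $G\times H$ with $\fa(G)$ colors. Taking the smaller of $\fa(G),\fa(H)$ and, symmetrically, the larger of $\fb(G),\fb(H)$, then gives the stated bounds for the direct product.
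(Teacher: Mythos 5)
Your proposal is correct and shares the paper's overall skeleton---fall-colorings as Type II homomorphisms into complete graphs, preservation of $\TII$ under products, and transitivity---but it diverges in how two of the three cases are closed out. For the lexicographic, strong, and co-normal products your argument is the paper's: the componentwise map is shown to be a Type II homomorphism of products (the paper does this for all adjacency products at once, splitting each edge condition into its ``basic formulas''), and then $K_{p}\odot K_{q}\cong K_{pq}$ finishes. For the cartesian and direct products, however, the paper stays inside the same meta-framework: it reduces to $K_p\oblong K_q$ and $K_p\times K_q$ via the product-preservation lemma and then computes ${\cal F}(K_p\oblong K_q)=\{\max\{p,q\}\}$ (by a Latin-square-type coloring of the fibers) and $\{p,q\}\subseteq {\cal F}(K_p\times K_q)$ (row and column colorings). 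You instead build the colorings directly on $G\oblong H$ and $G\times H$: your $f_G(u)+f_H(v)\bmod b$ coloring is the same Latin-square idea applied to the general product rather than to $K_a\oblong K_b$, and your projection argument for the direct product is essentially the row/column coloring lifted to general factors (it is, in fact, the content of the paper's later theorem that ${\cal F}(G)\subseteq{\cal F}(G\times H)$ when $H$ has no isolated vertices). Your route buys economy---in particular you avoid the paper's Theorem on $S_b(K_p\times K_q)=\{p,q\}$ entirely, whose hard direction is not needed here---while the paper's route buys uniformity and the stronger spectrum containments of its Corollaries. Two small points to tighten: ``whenever $H$ has an edge'' should be ``whenever $H$ has no isolated vertices'' for the projection to be domatic (in context this follows from ${\cal F}(H)\neq\emptyset$ together with $E(H)\neq\emptyset$, since every vertex of $H$ is then a b-vertex in a coloring with at least two colors); and both your argument and the paper's silently exclude the degenerate case of an edgeless factor, where $\fa=\fb=1$ and the direct-product inequality $\fb(G\times H)\ge\max\{\fb(G),\fb(H)\}$ can fail.
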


We mention that our results also give information about the b-spectrum and fall-spectrum of the products.




Our article is organized as follows. In Section~\ref{sec:hom}, we present the main definitions and the results concerning b-homomorphisms and Type II homomorphisms of products of graphs. In Section~\ref{sec:products}, we present the formal definition of the main graph products, analyse the structure of the products of complete graphs, and present bounds for the metrics on these products. The results on these two sections produce Theorems~\ref{theo:mainb} and~\ref{theo:mainfall}. In Section~\ref{sec:fallPart} we present some cases where ${\cal F}(G\odot H)$ can be non-empty even though ${\cal F}(H)$ is empty. 
 Finally, in Section~\ref{sec:conclusion} we present some questions left open, and show an example that give a negative answer to a question posed by Kaul and Mitillos about fall-colorings of perfect graphs~\cite{KM.16}.

\section{Homomorphisms and products}\label{sec:hom}

Given graphs $G$ and $H$, a \emph{graph product} $\odot$ on $G$ and $H$ is a graph $F$ such that $V(F)=V(G)\times V(H)$, and $\alpha\beta\in E(F)$ if and only if some condition $P_\odot(G,H,\alpha,\beta)$ is satisfied.
Given a vertex $u\in V(G)$, we denote by $V(u,H)$ the subset $\{(u,v)\mid v\in V(H)\}$, and the \emph{fiber of $u$ in $G\odot H$} is the subgraph of $G\odot H$ induced by $V(u,H)$. Given $v\in V(H)$, the subset $V(v,G)$ and fiber of $v$ are defined similarly. If there is no ambiguity, we ommit $H$ and $G$ in $V(u,H),V(v,G)$, respectively.

    We say that $\odot$ is an \emph{adjacency product} if $P_\odot(G,H,\alpha,\beta)$ is a composition of a subset of the following formulas, where $\alpha=(u_a,v_a)$ and $\beta=(u_b,v_b)$: 
\[{\cal B}(G,H,\alpha,\beta) = \{u_a=u_b,\ v_a=v_b,\ u_au_b\in E(G),\ v_av_b\in E(H)\}.\]

These are called \emph{basic formulas related to $(\alpha,\beta)$}, where $\alpha,\beta\in V(G)\times V(H)$. In Section~\ref{sec:products}, we present the formal definitions of the main studied adjacency products. The next proposition will be very useful throughout this section.

\begin{proposition}\label{prop:basic}
Let $G,H,G',H'$ be graphs, $\odot$ be an adjacency product, and consider vertices $\alpha,\beta \in V(G\odot H)$, and $\alpha'\beta'\in V(G'\odot H')$. If for every basic formula $\gamma(G,H,\alpha,\beta)$ in ${\cal B}(G,H,\alpha,\beta)$ we have that $\gamma(G,H,\alpha,\beta)$ implies $\gamma(G',H',\alpha',\beta')$, then
 \[P_\odot(G,H,\alpha,\beta) \Rightarrow P_\odot(G',H',\alpha',\beta').\]
\end{proposition}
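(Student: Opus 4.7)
The plan is to exploit the fact that, in all the adjacency products used in the paper (cartesian, direct, strong, lexicographic, co-normal), the predicate $P_\odot(G,H,\alpha,\beta)$ is a \emph{positive} Boolean combination of basic formulas, i.e.\ built from ${\cal B}(G,H,\alpha,\beta)$ using only the connectives $\wedge$ and $\vee$, with no negations. Granting this reading of the word \emph{composition} (which I would record explicitly at the start of the proof, since the statement is otherwise false), the conclusion is exactly the monotonicity of positive Boolean formulas.

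Concretely, I would proceed by structural induction on $P_\odot$. In the base case, $P_\odot$ is a single basic formula $\gamma\in{\cal B}(G,H,\alpha,\beta)$, and the desired implication is precisely the hypothesis. For the inductive step, write $P_\odot = \phi\star\psi$ with $\star\in\{\wedge,\vee\}$ and $\phi,\psi$ shorter positive combinations of basic formulas. The induction hypothesis yields
\[
\phi(G,H,\alpha,\beta)\Rightarrow \phi(G',H',\alpha',\beta')\quad\text{and}\quad \psi(G,H,\alpha,\beta)\Rightarrow \psi(G',H',\alpha',\beta'),
\]
and since $\wedge$ and $\vee$ are monotone in each coordinate, the implication lifts to $\phi\star\psi$, completing the induction.

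The only genuine obstacle is conceptual rather than technical: pinning down what \textquotedblleft composition\textquotedblright\ of basic formulas means. If one allowed negations, the proposition would fail — for instance, $\neg(u_au_b\in E(G))$ may hold while $\neg(u'_au'_b\in E(G'))$ does not, even if every positive basic formula that holds in $(G,H,\alpha,\beta)$ also holds in $(G',H',\alpha',\beta')$. To address this, I would include a short verification that the five products in Table~\ref{tab:products} admit defining predicates built only from $\wedge$ and $\vee$, and state the proposition with this restriction; the inductive argument above then applies verbatim.
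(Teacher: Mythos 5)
Your proof is correct. The paper in fact gives no proof of this proposition at all --- it is stated as an observation and treated as immediate --- so your structural induction on the defining predicate, using monotonicity of $\wedge$ and $\vee$, is simply the natural formalization of what the author leaves implicit. Your caveat is the one substantive point: the proposition is false if ``composition'' is allowed to include negation, so it must be read as ``positive Boolean combination,'' and your suggestion to verify this against the five predicates in Table~\ref{tab:products} (all of which use only $\wedge$ and $\vee$) is exactly the right way to close that gap; the paper never makes this restriction explicit, so your proof is, if anything, more careful than the source.
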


The next lemma tell us that graph homomorphisms are well behaved under adjacency products.

\begin{lemma}\label{lem:hom}
Let $G$, $H$ and $F$ be graphs and $\odot$ be an adjacency product. If $H\Hom F$, then $(G\odot H)\Hom (G\odot F)$ and $(H\odot G)\Hom (F\odot G)$.
\end{lemma}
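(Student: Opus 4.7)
The plan is to use the most natural candidate map. Fix a homomorphism $\varphi : V(H)\to V(F)$ witnessing $H\Hom F$, and define $\Phi : V(G\odot H)\to V(G\odot F)$ by $\Phi(u,v) = (u,\varphi(v))$. Showing that $(G\odot H)\Hom (G\odot F)$ then reduces to verifying that $\Phi$ preserves edges, i.e., that whenever $\alpha\beta\in E(G\odot H)$ with $\alpha=(u_a,v_a)$ and $\beta=(u_b,v_b)$, one also has $\Phi(\alpha)\Phi(\beta)\in E(G\odot F)$.

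To do this, I would invoke Proposition~\ref{prop:basic} with $G'=G$, $H'=F$, $\alpha'=(u_a,\varphi(v_a))$, $\beta'=(u_b,\varphi(v_b))$. The four basic formulas in ${\cal B}(G,H,\alpha,\beta)$ then need to be checked one at a time against their counterparts in ${\cal B}(G,F,\alpha',\beta')$. The two formulas involving the first coordinate, namely $u_a=u_b$ and $u_au_b\in E(G)$, are preserved trivially since $\Phi$ is the identity on that coordinate. The formula $v_a=v_b$ implies $\varphi(v_a)=\varphi(v_b)$ simply because $\varphi$ is a function, and $v_av_b\in E(H)$ implies $\varphi(v_a)\varphi(v_b)\in E(F)$ precisely because $\varphi$ is a homomorphism. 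With all four implications in hand, Proposition~\ref{prop:basic} delivers $P_\odot(G,H,\alpha,\beta)\Rightarrow P_\odot(G,F,\Phi(\alpha),\Phi(\beta))$, which is exactly what we need.

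For the second statement, $(H\odot G)\Hom (F\odot G)$, I would argue symmetrically, using $\Psi(v,u)=(\varphi(v),u)$ and applying Proposition~\ref{prop:basic} with the roles of the two coordinates swapped; the verification of the four basic implications is identical up to relabelling.

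I do not expect a real obstacle here: the content of the lemma is already compressed into Proposition~\ref{prop:basic}, whose point is precisely to reduce checks on a composite adjacency formula $P_\odot$ to checks on the four primitive formulas in ${\cal B}$. The only thing demanding a bit of care is bookkeeping, namely making sure the pairing $(\alpha,\beta)\leftrightarrow(\alpha',\beta')$ used to invoke the proposition matches the coordinate on which $\varphi$ acts, so that the trivially-preserved formulas and the $\varphi$-dependent formulas line up correctly.
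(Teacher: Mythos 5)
Your proposal is correct and follows exactly the paper's own argument: define $g((u,v))=(u,f(v))$, verify that each basic formula in ${\cal B}(G,H,\alpha,\beta)$ implies its counterpart in ${\cal B}(G,F,\alpha',\beta')$, and conclude via Proposition~\ref{prop:basic}, with the second statement handled symmetrically. No gaps.
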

\begin{proof}
Let $f$ be a homomorphism from $H$ to $F$, and denote $(G\odot H)$ and $(G\odot F)$ by $H',F'$, respectively. We prove that $H'\Hom F'$, and the other part of the lemma is analogous. For this, let $g:V(H')\Hom V(F')$ be defined as $g((u,v)) = (u,f(v))$. Let $\alpha\beta\in E(H')$; we need to prove that $g(\alpha)g(\beta)\in E(F')$. 

Write $\alpha$ and $\beta$ as $(u_a,v_a)$ and $(u_b,v_b)$, respectively, and let $\alpha'=g(\alpha) = (u_a,f(v_a))$ and $\beta' = g(\beta) = (u_b,f(v_b))$. Recall that:
\[{\cal B}(G,H,\alpha,\beta) = \{u_a=u_b,v_a=v_b, u_au_b\in E(G), v_av_b\in E(H)\}\mbox{, and}\]
\[{\cal B}(G,F,\alpha',\beta') = \{u_a=u_b,f(v_a)=f(v_b), u_au_b\in E(G), f(v_a)f(v_b)\in E(F)\}.\]
Clearly $v_a=v_b$ implies $f(v_a)=f(v_b)$, and since $f$ is a homomorphism we know that $v_av_b\in E(H)$ implies $f(v_a)f(v_b)\in E(F)$. The lemma follows by Proposition~\ref{prop:basic} and the fact that $\alpha\beta\in E(G\odot H)$, i.e., $P_\odot(G,H,\alpha,\beta)$ holds. 
\end{proof}

In the following subsections, we formally define and analyse analogous properties concerning b-homomorphisms and Type II homomorphisms.



\subsection{b-homomorphism}

Given graphs $G$ and $H$, and a function $f:V(G)\Hom V(H)$, we say that $f$ is a b-homomorphism if $f$ is a homomorphism and for every $u\in V(H)$, there exists $u'\in f^{-1}(u)$ such that $f(N_G(u')) = N_H(u)$, where $f(X)$ denotes $\{f(x)\mid x\in X\}$. If such a function exists, we write $G\bhom H$. Observe that $f$ is always a surjective function. 
The following is an important property of b-homomorphism.

\begin{proposition}[\cite{SLS.16}]
If $G\bhom H$ and $H\bhom F$, then $G\bhom F$.
\end{proposition}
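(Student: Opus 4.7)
The plan is to prove this by direct composition, showing that if $f:G\to H$ and $g:H\to F$ are b-homomorphisms, then $g\circ f: G\to F$ is a b-homomorphism as well. The first observation is that the composition of two graph homomorphisms is again a graph homomorphism, which is standard and immediate from the definition (each edge is mapped to an edge by $f$, then to an edge by $g$). So the real content is verifying the b-vertex condition for $g\circ f$.

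To verify that condition, I would fix an arbitrary vertex $w\in V(F)$ and try to exhibit a preimage $w'\in (g\circ f)^{-1}(w)$ whose neighborhood in $G$ maps \emph{onto} $N_F(w)$. The natural construction proceeds in two steps, pulling back through $g$ first and then through $f$. Since $g$ is a b-homomorphism, there is some $u\in g^{-1}(w)$ with $g(N_H(u))=N_F(w)$. Since $f$ is a b-homomorphism, there is some $u'\in f^{-1}(u)$ with $f(N_G(u'))=N_H(u)$. Setting $w':=u'$ gives $(g\circ f)(w')=g(u)=w$, and applying $g$ to the identity $f(N_G(u'))=N_H(u)$ yields
\[(g\circ f)(N_G(u'))\;=\;g(f(N_G(u')))\;=\;g(N_H(u))\;=\;N_F(w),\]
as required.

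I do not foresee a real obstacle; the only thing worth being careful about is confirming that $(g\circ f)(N_G(u'))\subseteq N_F(w)$ \emph{as a set}, i.e.\ that $w$ itself does not occur in the image of $N_G(u')$. This is automatic from the fact that $g\circ f$ is a homomorphism into a simple (loopless) graph $F$: every neighbor of $u'$ is mapped to a vertex adjacent to $w$ in $F$, and so distinct from $w$. Apart from this small sanity check, the argument is a clean two-step preimage construction that closely mirrors the analogous transitivity proof for ordinary homomorphisms, with the surjectivity-on-neighborhoods condition carried through at each step.
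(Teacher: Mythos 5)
Your argument is correct and complete: the two-step pullback (first through $g$, then through $f$) produces the required preimage, and the equality $(g\circ f)(N_G(u'))=g(N_H(u))=N_F(w)$ follows directly from the two b-homomorphism conditions. The paper states this proposition without proof, citing \cite{SLS.16}, and your composition argument is exactly the standard one; no gaps.
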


The following lemma is analogous to Lemma~\ref{lem:hom} and have been proved in~\cite{SLS.16} for the lexicographic product. 

\begin{lemma}\label{lem:bhom}
Let $G$, $H$, and $F$ be graphs and $\odot$ be an adjacency product. If $H\bhom F$, then $(G\odot H)\bhom (G\odot F)$ and $(H\odot G)\bhom (F\odot G)$.
\end{lemma}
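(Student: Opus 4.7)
The plan is to imitate the construction in the proof of Lemma~\ref{lem:hom}, but strengthen it using the b-vertex property of $f$. Given a b-homomorphism $f : V(H) \to V(F)$, I would define $g : V(G \odot H) \to V(G \odot F)$ by $g((u,v)) = (u, f(v))$. By Lemma~\ref{lem:hom}, $g$ is already a homomorphism, so it only remains to verify the b-condition: for every $(u, v') \in V(G \odot F)$, I must exhibit a preimage whose neighborhood is mapped onto $N_{G \odot F}((u, v'))$.

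Given $(u, v')$, use the b-homomorphism property of $f$ to pick $v \in f^{-1}(v')$ with $f(N_H(v)) = N_F(v')$, and take $(u,v)$ as the candidate preimage. The inclusion $g(N_{G \odot H}((u,v))) \subseteq N_{G \odot F}((u,v'))$ is immediate from $g$ being a homomorphism. For the reverse inclusion, I take an arbitrary $(u_1, v_1') \in N_{G \odot F}((u,v'))$ and construct a suitable $v_1 \in f^{-1}(v_1')$ with $(u_1, v_1) \in N_{G \odot H}((u,v))$.

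The construction of $v_1$ is the crux, and I expect it to be the main obstacle because the choice must be compatible with whichever basic formulas happen to witness the edge in $G \odot F$. I would split into three cases: (i) if $v_1' = v'$, set $v_1 = v$; (ii) else if $v' v_1' \in E(F)$, invoke the b-vertex property to pick $v_1 \in N_H(v)$ with $f(v_1) = v_1'$; (iii) otherwise, pick any $v_1 \in f^{-1}(v_1')$, using surjectivity of $f$. Cases (i) and (ii) are mutually exclusive since $F$ has no loops. In all three cases $f(v_1) = v_1'$, so $g((u_1,v_1)) = (u_1, v_1')$.

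Finally, I would apply Proposition~\ref{prop:basic} in the direction from $(G,F,(u,v'),(u_1,v_1'))$ to $(G,H,(u,v),(u_1,v_1))$: the $G$-side basic formulas $u = u_1$ and $u u_1 \in E(G)$ are literally the same on both sides; the formula $v' = v_1'$, when true, places us in case (i) and yields $v = v_1$; and the formula $v' v_1' \in E(F)$, when true, places us in case (ii) and yields $v v_1 \in E(H)$. Hence every basic formula holding in $(G,F,(u,v'),(u_1,v_1'))$ implies its counterpart in $(G,H,(u,v),(u_1,v_1))$, so Proposition~\ref{prop:basic} gives $P_\odot(G,H,(u,v),(u_1,v_1))$, i.e.\ $(u_1,v_1) \in N_{G \odot H}((u,v))$. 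The statement for $(H \odot G) \bhom (F \odot G)$ is proved by exactly the symmetric argument.
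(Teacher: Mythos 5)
Your proposal is correct and follows essentially the same route as the paper's proof: the same map $g((u,v))=(u,f(v))$, the same choice of preimage via the b-vertex property of $f$, the same three-case construction of the preimage neighbor, and the same appeal to Proposition~\ref{prop:basic}. The only cosmetic difference is that you make explicit the easy inclusion $g(N_{G\odot H}((u,v)))\subseteq N_{G\odot F}((u,v'))$, which the paper leaves implicit.
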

\begin{proof}
Let $f$ be a b-homomorphism from $H$ to $F$, and denote $G\odot H$ and $G\odot F$ by $H',F'$, respectively. Define $g:V(H')\rightarrow V(F')$ as $g((u,v)) = (u,f(v))$. We prove that $g$ is a b-homomorphism; the other part of the theorem is analogous.

  By Lemma~\ref{lem:hom}, we know that $g$ is a homomorphism. So now consider $\alpha=(u_a,v_a)\in V(F')$; we need to show that there exists $\alpha'=(u'_a,v'_a)\in g^{-1}(\alpha)$ such that $g(N_{H'}(\alpha')) = N_{F'}(\alpha)$. Because $f$ is a b-homomorphism, there exists $v'_a\in f^{-1}(v_a)$ such that $f(N_H(v'_a)) = N_{\cal F}(v_a)$. So let $\alpha' = (u_a,v'_a)$, and consider any $\beta = (u_b,v_b)\in N_{F'}(\alpha)$. 
  We want to prove that there exists $\beta'\in N_{H'}(\alpha')$ such that $g(\beta')=\beta$. Recall that:
\[{\cal B}(G,F,\alpha,\beta) = \{u_a=u_b,v_a=v_b, u_au_b\in E(G), v_av_b\in E(F)\}\]

And for any $\beta' = (u_b,v'_b)$ where $v'_b\in E(H)$, we have: 
\[{\cal B}(G,H,\alpha',\beta') = \{u_a=u_b,v'_a=v'_b, u_au_b\in E(G), v'_av'_b\in E(F)\}\]
  
We want to find $v'_b$ such that $g(\beta') = \beta$ and that makes the basic formulas in the first equation imply the basic formulas in the second. If this is the case, then Proposition~\ref{prop:basic} implies $\beta'\in N_{F'}(\alpha')$ and we are done. 
If $v_a=v_b$, then let $v'_b$ be $v'_a$.
If $v_av_b\in E(F)$, then choose any $v'_b\in N_H(v'_a)$ such that $f(v'_b) = v_b$ (it exists by the choice of $v'_a$). Finally, if $v_a\neq v_b$ and $v_av_b\notin E(F)$, just let $v'_b$ be any vertex in $f^{-1}(v_b)$ (it exists since $f$ is surjective). One can verify that $v'_b$ is the desired vertex.
\end{proof}

\begin{corollary}
Let $G,H,G',H'$ be graphs and $\odot$ be an adjacency product. If $G\bhom G'$ and $H\bhom H'$, then $(G\odot H)\bhom (G'\odot H')$.
\end{corollary}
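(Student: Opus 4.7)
The plan is to derive the corollary immediately from Lemma~\ref{lem:bhom} together with the transitivity of the $\bhom$ relation stated in the proposition from~\cite{SLS.16}. The key observation is that Lemma~\ref{lem:bhom} lets us replace either coordinate of a product independently, so a two-step replacement suffices.

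Concretely, first I would use the hypothesis $G \bhom G'$ together with the second assertion of Lemma~\ref{lem:bhom} (applied with the roles $H \leftarrow G$, $F \leftarrow G'$, and the ``outer'' graph being $H$) to conclude that
\[(G \odot H) \bhom (G' \odot H).\]
Then I would apply the first assertion of Lemma~\ref{lem:bhom} to the hypothesis $H \bhom H'$, this time with the outer graph being $G'$, to obtain
\[(G' \odot H) \bhom (G' \odot H').\]

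Finally, transitivity of b-homomorphisms (the cited proposition from~\cite{SLS.16}) composes these two b-homomorphisms to give $(G \odot H) \bhom (G' \odot H')$, as required. There is no real obstacle here: the corollary is a direct consequence of the two sides of Lemma~\ref{lem:bhom} being available simultaneously and of transitivity. The only thing one needs to be careful about is invoking the correct side of Lemma~\ref{lem:bhom} in each step, so that the ``fixed'' factor matches what the lemma requires. Since adjacency products are not assumed commutative, using the two statements of Lemma~\ref{lem:bhom} (one for replacement on the right and one for replacement on the left) is essential.
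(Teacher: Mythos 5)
Your proposal is correct and is precisely the argument the paper intends: the corollary is stated without proof immediately after Lemma~\ref{lem:bhom}, as it follows by applying the two directional statements of that lemma in succession ($(G\odot H)\bhom(G'\odot H)$ and then $(G'\odot H)\bhom(G'\odot H')$) and composing via the transitivity of $\bhom$ from~\cite{SLS.16}. Your care in distinguishing the left- and right-replacement versions of the lemma, since the product need not be commutative, is exactly the right point to attend to.
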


\begin{corollary}\label{cor:bspectrum}
Let $G,H$ be graphs and $\odot$ be an adjacency product. Then:
\[\bigcup_{\substack{k\in S_b(G)\\k'\in S_b(H)}}S_b(K_k\odot K_{k'})\subseteq \left(\bigcup_{k\in S_b(H)}S_b(G\odot K_k)\right) \cap \left(\bigcup_{k\in S_b(G)}S_b(K_k\odot H)\right), \]
and
\[\left(\bigcup_{k\in S_b(H)}S_b(G\odot K_k)\right) \cup \left(\bigcup_{k\in S_b(G)}S_b(K_k\odot H)\right)\subseteq S_b(G\odot H).\]
\end{corollary}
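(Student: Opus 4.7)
My plan is to reduce the corollary to Lemma~\ref{lem:bhom} combined with the transitivity of b-homomorphisms (the proposition just above it), using the dictionary that $k\in S_b(G)$ holds if and only if $G\bhom K_k$. Under this dictionary, each side of each inclusion is an existence statement about b-homomorphisms into some $K_n$, and the two ingredients above let us propagate such homomorphisms across a product.

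For the second inclusion, I would take any $n$ in the left-hand set; by symmetry it suffices to treat the case $n\in S_b(G\odot K_{k'})$ with $k'\in S_b(H)$. This gives $H\bhom K_{k'}$ and $G\odot K_{k'}\bhom K_n$. Lemma~\ref{lem:bhom} promotes the first to $G\odot H\bhom G\odot K_{k'}$, and transitivity then composes this with the second into $G\odot H\bhom K_n$, i.e., $n\in S_b(G\odot H)$. The case $n\in S_b(K_k\odot H)$ with $k\in S_b(G)$ is handled the same way, using the other half of Lemma~\ref{lem:bhom}.

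For the first inclusion, I would fix $n\in S_b(K_k\odot K_{k'})$ with $k\in S_b(G)$ and $k'\in S_b(H)$, unpacking this as three b-homomorphisms $G\bhom K_k$, $H\bhom K_{k'}$, $K_k\odot K_{k'}\bhom K_n$. Applying Lemma~\ref{lem:bhom} to $H\bhom K_{k'}$ and composing with the third yields $K_k\odot H\bhom K_n$, so $n\in S_b(K_k\odot H)$; applying the other half of the lemma to $G\bhom K_k$ and composing likewise yields $G\odot K_{k'}\bhom K_n$, so $n\in S_b(G\odot K_{k'})$. Since $k\in S_b(G)$ and $k'\in S_b(H)$, each of these memberships sits inside the appropriate union, so $n$ lies in their intersection.

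The whole argument is bookkeeping on top of Lemma~\ref{lem:bhom} and transitivity; there is no real obstacle. The only thing to watch is that $\odot$ is not generally commutative (e.g.\ the lexicographic or co-normal product), so it matters which conclusion of Lemma~\ref{lem:bhom} (the $G\odot H\bhom G\odot F$ version versus the $H\odot G\bhom F\odot G$ version) is invoked on each side, as indicated above.
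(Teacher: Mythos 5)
Your proof is correct and is essentially the argument the paper intends: the corollary is derived from Lemma~\ref{lem:bhom} together with the transitivity of $\bhom$, via the dictionary $k\in S_b(G)\Leftrightarrow G\bhom K_k$, exactly as you do. Your care about which half of Lemma~\ref{lem:bhom} applies on each side (since $\odot$ need not be commutative) matches the paper's formulation.
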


It is known that, contrary to the chromatic number, the b-chromatic number is not a monotonic parameter, i.e., a graph $G$ might have a subgraph $H$ such that $b(H) > b(G)$. For instance, let $H$ be obtained from the complete bipartite graph $K_{3,3}$ by removing a perfect matching, and let $G$ be obtained from $H$ by adding vertices $u,v$, edge $uv$ and making $u$ complete to one of the parts of $H$ and $v$ to the other. One can verify that $b(G) = 2 < b(H) = 3$. This is why we cannot ensure that the maximum value in the sets of Corollary \ref{cor:bspectrum} are attained when $k=b(G)$ and $k'=b(H)$. Nevertheless, we get:

\begin{corollary}\label{lem:bGdotH}
Let $G$ and $H$ be graphs, and $\odot$ be an adjacency produt. Also, let $S$ denote the set $\bigcup_{k\in S_b(G),k'\in S_b(H)}b(K_k\odot K_{k'})$. Then: 
\[b(G\odot H)\ge \max\{k\mid k\in S\}\ge b(K_{b(G)}\odot K_{b(H)}).\]
\end{corollary}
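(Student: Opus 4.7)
The plan is to read both inequalities off Corollary~\ref{cor:bspectrum} together with the definition of the b-chromatic number as the maximum of the b-spectrum; essentially no new combinatorial argument is required.

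First I would chain the two inclusions in Corollary~\ref{cor:bspectrum}. Because the right-hand side of the first inclusion is an intersection, it is contained in each of its two components; in particular,
\[\bigcup_{k\in S_b(G),\,k'\in S_b(H)} S_b(K_k\odot K_{k'}) \;\subseteq\; \bigcup_{k\in S_b(H)} S_b(G\odot K_k).\]
Similarly, the left-hand side of the second inclusion is a union, so $\bigcup_{k\in S_b(H)} S_b(G\odot K_k) \subseteq S_b(G\odot H)$. Composing the two inclusions yields
\[\bigcup_{k\in S_b(G),\,k'\in S_b(H)} S_b(K_k\odot K_{k'}) \;\subseteq\; S_b(G\odot H).\]

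Next, for any $k\in S_b(G)$ and $k'\in S_b(H)$, one has $b(K_k\odot K_{k'}) \in S_b(K_k\odot K_{k'})$ just by definition of the b-chromatic number; the inclusion above places this value in $S_b(G\odot H)$, so $b(K_k\odot K_{k'}) \le b(G\odot H)$. Taking the maximum over admissible pairs gives $\max S \le b(G\odot H)$, which is the first inequality. For the second inequality, note that $b(G) \in S_b(G)$ and $b(H) \in S_b(H)$ by definition, so the pair $(b(G),b(H))$ is one of those indexing $S$, whence $b(K_{b(G)}\odot K_{b(H)}) \in S$ and therefore $\max S \ge b(K_{b(G)}\odot K_{b(H)})$.

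There is no real obstacle here: the argument is pure set-chasing combined with the definition of $S_b$. The only subtlety worth flagging is that one cannot strengthen the statement to an equality with $b(K_{b(G)}\odot K_{b(H)})$, because $b$ is not monotone on subgraphs (as noted in the paragraph preceding the corollary); this is precisely why the intermediate quantity $\max S$ appears, and the proof handles it by establishing the two inequalities separately.
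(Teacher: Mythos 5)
Your proof is correct and follows exactly the route the paper intends: the corollary is stated without proof as an immediate consequence of Corollary~\ref{cor:bspectrum}, and your chaining of its two inclusions, followed by the observation that $b(K_k\odot K_{k'})\in S_b(K_k\odot K_{k'})$ and that $b(G)\in S_b(G)$, $b(H)\in S_b(H)$, is precisely the intended argument. Your closing remark about non-monotonicity also matches the paper's own discussion preceding the corollary.
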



\subsection{Type II homomorphism}

Given graphs $G$ and $H$, a function $f:V(G)\rightarrow V(H)$ is a \emph{domatic homomorphism} if for every $u'\in V(H)$, $v'\in N(u')$, and $u\in f^{-1}(u')$, there exists $v\in f^{-1}(v)$ such that $uv\in E(G)$. Observe that a domatic homomorphism is not necessarily a homomorphism. In~\cite{LL.09}, the authors define a \emph{Type II homomorphism} as being a homomorphism which is also a domatic homomorphism. They then prove that the existence of such an homomorphism is a transitive relation, and investigate the cartesian product of graphs, in particular of two trees. If there exists a domatic homomorphism or Type II homomorphism from $G$ to $H$, then we write $G\Dom H$ or $G\TII H$, respectivelly. 
We want to prove an analogous version of Lemma~\ref{lem:bhom} for Type II homomorphisms. Because of Lemma~\ref{lem:hom}, we only need to prove the following.


\begin{lemma}\label{lem:dom}
Let $G$, $H$, and $F$ be graphs and $\odot$ be an adjacency product. If there exists a surjective domatic homomorphism $f$ from $H$ to $F$, then $(G\odot H)\Dom (G\odot F)$ and $(H\odot G)\Dom (F\odot G)$.
\end{lemma}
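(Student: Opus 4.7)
The plan is to mimic the construction used in Lemma~\ref{lem:bhom}: define $g:V(G\odot H)\to V(G\odot F)$ by $g((u,v))=(u,f(v))$, and show that $g$ is a (surjective) domatic homomorphism. Surjectivity of $g$ is immediate from surjectivity of $f$. The symmetric statement $(H\odot G)\Dom(F\odot G)$ follows by an entirely analogous argument, so I would only write out the first case.

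The core of the proof is the following. Fix $\alpha'=(u_a,v'_a)\in V(G\odot F)$, a neighbor $\beta'=(u_b,v'_b)$ of $\alpha'$ in $G\odot F$, and some $\alpha=(u_a,v_a)\in g^{-1}(\alpha')$; the task is to produce $\beta=(u_b,v_b)\in g^{-1}(\beta')$ with $\alpha\beta\in E(G\odot H)$. Note that only the $H$-coordinate $v_b$ is free, since $\beta$ must project to $\beta'$ under $g$. I would choose $v_b$ by cases driven by which basic formulas in ${\cal B}(G,F,\alpha',\beta')$ are responsible for the edge $\alpha'\beta'$:
\begin{itemize}
\item If $v'_a=v'_b$, set $v_b:=v_a$; then $f(v_b)=v'_a=v'_b$ and $v_a=v_b$.
\item Else, if $v'_av'_b\in E(F)$, invoke the domatic property of $f$ at $v_a\in f^{-1}(v'_a)$ with $v'_b\in N_F(v'_a)$ to obtain $v_b\in f^{-1}(v'_b)$ with $v_av_b\in E(H)$.
\item Otherwise (the edge $\alpha'\beta'$ comes only from formulas in the $G$-coordinate), pick any $v_b\in f^{-1}(v'_b)$, which exists by surjectivity of $f$.
\end{itemize}
Note that the first two cases are never both active in a simple graph, since that would force $v'_a$ to have a self-loop.

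The verification is then routine: for the chosen $\beta=(u_b,v_b)$ and each basic formula $\gamma\in{\cal B}(G,F,\alpha',\beta')$, the construction guarantees that $\gamma(G,F,\alpha',\beta')$ implies the corresponding formula $\gamma(G,H,\alpha,\beta)$ in ${\cal B}(G,H,\alpha,\beta)$ (the formulas $u_a=u_b$ and $u_au_b\in E(G)$ are identical on both sides and require no work). Proposition~\ref{prop:basic} then yields $P_\odot(G,H,\alpha,\beta)$, i.e.\ $\alpha\beta\in E(G\odot H)$, as required.

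The only point requiring real care is ensuring that a single choice of $v_b$ simultaneously witnesses every basic formula that holds for $(\alpha',\beta')$; this is the reason for splitting into the three cases above and observing that $v'_a=v'_b$ and $v'_av'_b\in E(F)$ cannot hold together. Everything else is bookkeeping, and the argument for $(H\odot G)\Dom(F\odot G)$ is obtained by interchanging the roles of the two coordinates in the definition of $g$ and in the reading of ${\cal B}$.
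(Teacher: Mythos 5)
Your proposal is correct and follows essentially the same route as the paper's own proof: the same map $g((u,v))=(u,f(v))$, the same three-way case analysis on whether $v'_a=v'_b$, $v'_av'_b\in E(F)$, or neither (using, respectively, the identity, the domatic property, and surjectivity to choose the free $H$-coordinate), and the same appeal to Proposition~\ref{prop:basic}. The only difference is cosmetic notation (you prime the vertices of the target product, the paper primes those of the source).
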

\begin{proof}
Again, denote $G\odot H, G\odot F$ by $H',F'$, respectively, and define $g:V(H')\rightarrow V(F')$ as $g((u,v)) = (u,f(v))$. Let $\alpha\beta\in E(F')$ and $\alpha'\in g^{-1}(\alpha)$. We want to prove that there exists $\beta'\in g^{-1}(\beta)$ such that $\alpha'\beta'\in E(H')$. Write $\alpha,\alpha',\beta$ as $(u_a,v_a),(u_a,v'_a),(u_b,v_b)$, respectively. 
Recall that:
\begin{equation}{\cal B}(G,F,\alpha,\beta) = \{u_a=u_b,v_a=v_b, u_au_b\in E(G), v_av_b\in E(F)\}.\label{eq1}\end{equation}

And for any $\beta' = (u_b,v'_b)$ where $v'_b\in E(H)$, we have: 
\begin{equation}{\cal B}(G,H,\alpha',\beta') = \{u_a=u_b,v'_a=v'_b, u_au_b\in E(G), v'_av'_b\in E(F)\}\label{eq2}\end{equation}

Again, we want to choose $v'_b$ such that the following holds:\\
\begin{itemize}
\item[(*)] $g(\beta') = \beta$, and the basic formulas in~(\ref{eq1}) imply the ones in~(\ref{eq2}).
\end{itemize}

If $v_a=v_b$, again let $v'_b$ be $v'_a$. We get $g(\beta') = (u_b,f(v'_a)) = (u_b,v_a) = \beta$. Also, because $F$ is a graph, we get that $v_av_a\notin E(F)$, in which case (*) can be verified. 
If $v_av_b\in E(F)$, choose $v'_b\in f^{-1}(v_b)$ such that $v'_av'_b\in E(H)$ (it exists because $f$ is a domatic homomorphism and $v'_a\in f^{-1}(v_a)$). We get $g(\beta') = (u_b,v_b) = \beta$, and because $v_av_b\in E(F)$ implies that $v_a\neq v_b$, one can verify that (*) holds.  
Similarly, if $v_a\neq v_b$ and $v_av_b\notin E(F)$, just let $v'_b$ be any vertex in $f^{-1}(v_b)$ (it exists since $f$ is surjective). 
\end{proof}

Observe that if $H$ does not have any isolated vertices, then every domatic homomorphism into $H$ is also surjective. Therefore, we get the following corollary.

\begin{corollary}
Let $G,H,G',H'$ be graphs and $\odot$ be an adjacency product. If $G'$ and $H'$ have no isolated vertices, $G\TII G'$ and $H\TII H'$, then $(G\odot H)\TII (G'\odot H')$.
\end{corollary}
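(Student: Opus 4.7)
The plan is to combine Lemmas~\ref{lem:hom} and~\ref{lem:dom} to lift each Type II homomorphism through the product one factor at a time, and then chain the two lifts using the transitivity of $\TII$ proved in~\cite{LL.09}.

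First I would use the observation stated right before the corollary: since $H'$ has no isolated vertices, any witness $f\colon H\TII H'$ is in particular a surjective domatic homomorphism, so Lemma~\ref{lem:dom} applies; and of course it is also a homomorphism, so Lemma~\ref{lem:hom} applies. The key point --- and the only point that needs a moment's inspection --- is that the proofs of Lemmas~\ref{lem:hom} and~\ref{lem:dom} both lift $f$ by the \emph{same} formula $g((u,v)) = (u,f(v))$. Thus this single map $g$ is simultaneously a homomorphism and a domatic homomorphism from $G\odot H$ to $G\odot H'$, i.e.\ a Type II homomorphism, which gives $(G\odot H)\TII(G\odot H')$. The ``$(H\odot G)\Hom(F\odot G)$'' and ``$(H\odot G)\Dom(F\odot G)$'' halves of the same two lemmas, applied now to a witness $f\colon G\TII G'$ (whose domatic part is surjective because $G'$ has no isolated vertices), yield $(G\odot H')\TII(G'\odot H')$ via the mirror formula $g((u,v))=(f(u),v)$.

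Concatenating the two Type II homomorphisms and invoking the transitivity of $\TII$ from~\cite{LL.09} produces $(G\odot H)\TII(G'\odot H')$, which is the statement. The main obstacle, such as it is, lies in noting that the homomorphism half and the domatic half must be provided by one and the same map in order to qualify as Type II; had Lemmas~\ref{lem:hom} and~\ref{lem:dom} used different lifts we would only get a homomorphism and an unrelated domatic homomorphism. Because both lemmas use the identical formula on $V(G\odot H)$, this coincidence is automatic, so no extra work is required beyond citing the previous two lemmas and transitivity.
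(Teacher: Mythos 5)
Your proposal is correct and matches the paper's intended argument: the corollary is stated without proof precisely because Lemmas~\ref{lem:hom} and~\ref{lem:dom} share the same lift $g((u,v))=(u,f(v))$ (the paper even signals this by saying that, because of Lemma~\ref{lem:hom}, only the domatic part needs proving), and the two one-coordinate lifts are then chained by the transitivity of $\TII$ from~\cite{LL.09}. Your explicit remark that the homomorphism and domatic halves must be witnessed by the same map is exactly the right point to check, and no gap remains.
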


As we already mentioned, the following has been proved in~\cite{LL.09}.

\begin{lemma}[\cite{LL.09}]
Let $G,H,F$ be graphs. Then, \[(G\TII H\ and\ H\TII F) \Rightarrow G\TII F.\]
\end{lemma}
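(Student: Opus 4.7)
The plan is to prove transitivity by direct composition: take Type II homomorphisms $f : G \to H$ and $g : H \to F$, and show that $h = g \circ f : G \to F$ is again a Type II homomorphism. Since a Type II homomorphism is a homomorphism that is additionally domatic, there are two things to verify.

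The homomorphism part is free: the composition of two homomorphisms is a homomorphism, since if $uv \in E(G)$, then $f(u)f(v) \in E(H)$, and so $g(f(u))g(f(v)) \in E(F)$. The content of the lemma therefore lies entirely in showing that $h$ is a domatic homomorphism. Concretely, I would fix an arbitrary $u'' \in V(F)$, an arbitrary neighbor $v'' \in N_F(u'')$, and an arbitrary $u \in h^{-1}(u'')$, and then produce a neighbor $v$ of $u$ in $G$ with $h(v) = v''$.

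To produce $v$, I chain the two domatic properties. Set $u' = f(u)$, so $g(u') = h(u) = u''$; hence $u' \in g^{-1}(u'')$. Since $g$ is domatic and $v'' \in N_F(g(u'))$, there exists $v' \in g^{-1}(v'')$ with $u'v' \in E(H)$. Now apply the domatic property of $f$: since $v' \in N_H(u') = N_H(f(u))$ and $u \in f^{-1}(u')$, there exists $v \in f^{-1}(v')$ with $uv \in E(G)$. Then $h(v) = g(f(v)) = g(v') = v''$, so $v$ is the desired vertex.

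There is no real obstacle here; the only thing to be careful about is keeping straight which ``level'' each preimage lives at and invoking the two domatic conditions in the correct order (first in $g$, descending into $H$, then in $f$, descending into $G$). Note also that the proof does not rely on surjectivity of $f$ or $g$ individually, only on the existence of the required preimage vertices guaranteed by the domatic hypothesis applied to the specific vertices in hand.
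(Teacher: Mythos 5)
Your proof is correct. The paper does not supply its own argument for this lemma---it is cited from the literature (\cite{LL.09})---but the direct composition argument you give, chaining the domatic property of $g$ to descend from $F$ into $H$ and then the domatic property of $f$ to descend from $H$ into $G$, is the standard and essentially only natural proof of this transitivity, and all steps check out against the paper's definitions.
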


Also, note that if $k\in {\cal F}(G)$, then there exists a surjective Type II homomorphism from $G$ to $K_k$. As a corollary, we get:

\begin{corollary}\label{cor:FallSpectrum}
Let $G,H$ be graphs and $\odot$ be an adjacency product. Then:
\[\bigcup_{\substack{k\in {\cal F}(G)\\k'\in {\cal F}(H)}}{\cal F}(K_k\odot K_{k'}) \subseteq \left(\bigcup_{k\in {\cal F}(H)}{\cal F}(G\odot K_k)\right) \cap \left(\bigcup_{k\in {\cal F}(G)}{\cal F}(K_k\odot H)\right),\]
and
\[\left(\bigcup_{k\in {\cal F}(H)}{\cal F}(G\odot K_k)\right) \cup \left(\bigcup_{k\in {\cal F}(G)}{\cal F}(K_k\odot H)\right) \subseteq {\cal F}(G\odot H).\]
\end{corollary}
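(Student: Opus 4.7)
The plan is to translate everything into the language of surjective Type II homomorphisms into complete graphs and then apply Lemmas~\ref{lem:hom},~\ref{lem:dom} together with the transitivity lemma. The key observation, already made in the paragraph preceding the corollary, is that $k\in{\cal F}(X)$ if and only if there is a surjective Type II homomorphism $X\TII K_k$: a fall-coloring of $X$ with $k$ colors is automatically a homomorphism to $K_k$, and the b-vertex condition becomes exactly the domatic condition when the codomain is complete; conversely, a surjective Type II homomorphism $X\TII K_k$ is proper and gives every vertex of $X$ a neighbor in every other color class. This reformulation is what will let us chain everything through transitivity.

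For the second inclusion I will pick $m$ on the right-hand side, say $m\in{\cal F}(G\odot K_k)$ with $k\in{\cal F}(H)$. Then $H\TII K_k$ surjectively, so by Lemmas~\ref{lem:hom} and~\ref{lem:dom} (applied with $G$ fixed in the first coordinate) the induced map $g((u,v))=(u,f(v))$ is a surjective Type II homomorphism $G\odot H\TII G\odot K_k$. Composing this with the surjective Type II homomorphism $G\odot K_k\TII K_m$ provided by $m\in{\cal F}(G\odot K_k)$ and invoking transitivity yields a surjective Type II homomorphism $G\odot H\TII K_m$, hence $m\in{\cal F}(G\odot H)$. The other half of the union on the left is handled identically using the ``$(H\odot G)\Dom(F\odot G)$'' part of Lemmas~\ref{lem:hom} and~\ref{lem:dom}.

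For the first inclusion I will take $m\in{\cal F}(K_k\odot K_{k'})$ with $k\in{\cal F}(G)$ and $k'\in{\cal F}(H)$. Using $G\TII K_k$ surjectively together with the ``$(H\odot G)\Dom(F\odot G)$'' conclusions of Lemmas~\ref{lem:hom} and~\ref{lem:dom} (i.e., homomorphing the first factor while keeping $K_{k'}$ fixed) gives a surjective Type II homomorphism $G\odot K_{k'}\TII K_k\odot K_{k'}$. Composing with the fall-coloring $K_k\odot K_{k'}\TII K_m$ and using transitivity places $m$ in ${\cal F}(G\odot K_{k'})$, and since $k'\in{\cal F}(H)$ this puts $m$ in $\bigcup_{j\in{\cal F}(H)}{\cal F}(G\odot K_j)$. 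A completely symmetric argument, using instead $H\TII K_{k'}$ with $K_k$ fixed in the first coordinate, places $m$ in $\bigcup_{j\in{\cal F}(G)}{\cal F}(K_j\odot H)$, so $m$ lies in the intersection as required.

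No step looks genuinely hard; the only subtlety is to verify that the map $g((u,v))=(u,f(v))$ built in the lemmas inherits surjectivity from $f$, which is immediate from its definition, and to invoke the correct side of Lemmas~\ref{lem:hom}--\ref{lem:dom} depending on whether the first or the second coordinate is being homomorphed. Everything else is pure bookkeeping on top of the transitivity of Type II homomorphisms.
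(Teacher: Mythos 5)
Your proof is correct and follows exactly the route the paper intends for this corollary: identify $k\in{\cal F}(X)$ with a surjective Type~II homomorphism $X\TII K_k$, lift it through the product via Lemmas~\ref{lem:hom} and~\ref{lem:dom}, and chain with transitivity. Your added care about surjectivity of the induced map and of the composition is a reasonable (and correct) filling-in of details the paper leaves implicit.
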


Similarly to the previous section, we get the following.

\begin{lemma}\label{lem:fallPar}
Let $G,H$ be graphs and $\odot$ be an adjacency product, and let ${\cal F}$ denote the set $\bigcup_{p\in {\cal F}(G),q\in {\cal F}(H)}{\cal F}(K_p\odot K_q)$. If ${\cal F}(G)\neq$ and ${\cal F}(H)\neq\emptyset$, then $\emptyset\neq {\cal F} \subseteq {\cal F}(G\odot H)$, and:
\[\fa(G\odot H)\le \min\{k\mid k\in {\cal F}\}\le \fa(K_{\fa(G)}\odot K_{\fa(H)})\mbox{, and}\]

\[\fb(G\odot H)\ge \max\{k\mid k\in {\cal F}\} \ge \fb(K_{\fb(G)}\odot K_{\fb(H)}).\]
\end{lemma}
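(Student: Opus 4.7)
The plan is to reduce everything to Corollary~\ref{cor:FallSpectrum} together with the verification, carried out in Section~\ref{sec:products}, that $K_p\odot K_q$ admits a fall-coloring for each of the main adjacency products. The structure is parallel to that of Corollary~\ref{lem:bGdotH} for the b-spectrum.

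First I would establish the inclusion ${\cal F}\subseteq {\cal F}(G\odot H)$. This is essentially immediate from Corollary~\ref{cor:FallSpectrum}: for every $p\in {\cal F}(G)$ and $q\in {\cal F}(H)$, that corollary gives ${\cal F}(K_p\odot K_q)\subseteq {\cal F}(G\odot H)$, and ${\cal F}$ is by definition the union of these sets over all such $p,q$. Once this is in place, the two outer inequalities follow from the definitions of $\fa$ and $\fb$ as the minimum and maximum of ${\cal F}(G\odot H)$: any element of ${\cal F}$ serves as an upper bound for $\fa(G\odot H)$ and a lower bound for $\fb(G\odot H)$.

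Second I would argue non-emptiness of ${\cal F}$. Since ${\cal F}(G)$ and ${\cal F}(H)$ are non-empty by hypothesis, it suffices to exhibit a single pair $(p,q)$, and in particular the pairs $(\fa(G),\fa(H))$ and $(\fb(G),\fb(H))$, for which $K_p\odot K_q$ admits a fall-coloring. For the adjacency products considered in this paper, this is verified in Section~\ref{sec:products}: the lexicographic, strong and co-normal products collapse $K_p\odot K_q$ to $K_{pq}$, whose identity coloring is fall; the cartesian product $K_p\Box K_q$ is a rook graph that is fall-colored by a cyclic Latin rectangle with $\max\{p,q\}$ colors; and the direct product $K_p\times K_q$ is fall-colored by projecting onto the smaller factor, since any two rows and any two columns see each other fully.

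Finally, the two inner inequalities are read off by naming specific witnesses. Since $\fa(K_{\fa(G)}\odot K_{\fa(H)})\in {\cal F}(K_{\fa(G)}\odot K_{\fa(H)})\subseteq {\cal F}$, we obtain $\min\{k\mid k\in{\cal F}\}\le \fa(K_{\fa(G)}\odot K_{\fa(H)})$; the dual argument, with $\fb$ in place of $\fa$ and maxima in place of minima, yields the symmetric bound involving $\fb(G\odot H)$. The only genuinely non-trivial step is the non-emptiness check for $K_p\odot K_q$, which is product-specific and must be deferred to the case analysis in Section~\ref{sec:products}; everything else is bookkeeping on top of the transitivity of Type II homomorphisms already encoded in Corollary~\ref{cor:FallSpectrum}.
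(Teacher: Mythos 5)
Your proposal is correct and follows essentially the same route the paper intends: the paper gives no explicit proof of Lemma~\ref{lem:fallPar}, deriving it from Corollary~\ref{cor:FallSpectrum} ``similarly to the previous section,'' i.e.\ exactly as Corollary~\ref{lem:bGdotH} is read off from Corollary~\ref{cor:bspectrum}, which is what you do. One small caveat: the lemma asserts ${\cal F}\neq\emptyset$ for an \emph{arbitrary} adjacency product, while your non-emptiness check defers to the case analysis of Section~\ref{sec:products}, which covers only the five named products; this is easily closed by noting that on two complete graphs any adjacency condition is constant on each of the three patterns ($u_a=u_b$ with $v_a\neq v_b$; $u_a\neq u_b$ with $v_a=v_b$; both different), so $K_p\odot K_q$ is one of only eight graphs (edgeless, disjoint cliques of either kind, complete multipartite of either kind, $K_p\times K_q$, $K_p\oblong K_q$, or $K_{pq}$), each of which has a non-empty fall-spectrum.
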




\section{Adjacency products of complete graphs}
\label{sec:products}

In this section we investigate the parameters of $K_p\odot K_q$ for the main adjacency products. The table below defines the condition $P_\odot$ for each of these products. If $uv\in E(G)$, we write $u\sim v$.

\begin{table}[htb]
\centering
\begin{tabular}{|c|c|c|}
\hline
{\bf Name} & {\bf Notation} & $P_\odot(G,H,(u_a,v_a),(u_b,v_b))$\\
\hline
					& & $u_a=u_b\ \wedge\ v_a\sim v_b$\\ 
Cartesian & $G\oblong H$ & $or$\\ & & $u_a\sim u_b\ \wedge\ v_a=v_b$\\
\hline
Direct & $G\times H$ & $u_a\sim u_b\ \wedge\ v_a\sim v_b$\\
\hline
					& & $u_a\sim u_b$\\
Lexicographic & $G[H]$ & $or$\\ & & $u_a=u_b\ \wedge\ v_a\sim v_b$\\
\hline
					& & $u_a=u_b\ \wedge\ v_a\sim v_b$\\
					& & $or$ \\
Strong & $G\boxtimes H$ & $u_a\sim u_b\ \wedge\ v_a=v_b$ \\
					& & $or$\\
					& & $u_a\sim u_b\ \wedge\ v_a\sim v_b$\\
\hline
Co-normal & $G*H$ & $u_a\sim u_b\vee v_a\sim v_b$ \\
\hline
\end{tabular}
\captionsetup{justification=centering}
\caption{Conditions for the existence of $(u_a,v_a)(u_b,v_b)$ in $E(G\odot H)$.}
\label{tab:products}
\end{table}

First, we investigate the structure of the product $K_p\odot K_q$. We write $G\cong H$ if $G$ and $H$ are isomorphic graphs.

\begin{proposition}\label{prop:Lex}
Let $p,q$ be positive integers and $\odot$ be either the lexicographic product, the strong product, or the co-normal product. Then \[K_p\odot K_q  \cong K_{pq}.\]
\end{proposition}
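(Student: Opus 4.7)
The plan is a direct verification: the vertex set of $K_p\odot K_q$ has exactly $pq$ elements (the pairs in $V(K_p)\times V(K_q)$), so it suffices to show that for each of the three products, every pair of distinct vertices $(u_a,v_a)\neq (u_b,v_b)$ is adjacent in $K_p\odot K_q$. The key observation driving all three cases is that in a complete graph $K_n$, two vertices are adjacent if and only if they are distinct. So the basic formulas simplify: $u_a\sim u_b$ is equivalent to $u_a\neq u_b$, and similarly for $v_a,v_b$.

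First I would dispatch the co-normal product, since its condition $u_a\sim u_b \vee v_a\sim v_b$ is weakest: as $(u_a,v_a)\neq (u_b,v_b)$ forces $u_a\neq u_b$ or $v_a\neq v_b$, one of the two disjuncts holds automatically. Next, for the lexicographic product $K_p[K_q]$ the condition is $u_a\sim u_b$ or ($u_a=u_b \wedge v_a\sim v_b$); if $u_a\neq u_b$ the first disjunct holds in $K_p$, and if $u_a=u_b$ then distinctness gives $v_a\neq v_b$, so the second disjunct holds in $K_q$. Finally, for the strong product I would split into the three sub-cases $u_a=u_b$ (with $v_a\neq v_b$), $v_a=v_b$ (with $u_a\neq u_b$), and $u_a\neq u_b\wedge v_a\neq v_b$; each sub-case matches exactly one of the three disjuncts of $P_\boxtimes$.

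There is no real obstacle here: the statement is essentially a sanity check on the adjacency conditions of Table~\ref{tab:products} once specialized to complete graphs. The only thing to be mildly careful about is to notice that the cartesian and direct products are conspicuously absent from the statement, which is consistent with the fact that $K_p\oblong K_q$ and $K_p\times K_q$ are generally not complete, and therefore no attempt should be made to extend the argument to those products.
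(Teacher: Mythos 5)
Your proposal is correct and follows essentially the same route as the paper: a direct verification that any two distinct vertices of $K_p\odot K_q$ satisfy the relevant adjacency condition, using the fact that distinct vertices of a complete graph are adjacent. The only cosmetic difference is that you organize the case analysis by product while the paper organizes it by which coordinates coincide; the content is identical.
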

\begin{proof}
Write $V(K_p)$ as $\{u_1,\cdots,u_p\}$ and $V(K_q)$ as $\{v_1,\cdots,v_q\}$, and let $\alpha=(u_i,v_j)$ and $\beta=(u_h,v_k)$ be distinct vertices of $V(K_p\odot K_q)$. If $i\neq h$ and $j\neq k$, then one can verify that:
\begin{itemize}
\item $\alpha\beta\in E(K_p[K_q])$, since $u_i\sim u_h$;
\item $\alpha\beta\in E(K_p\boxtimes K_q)$, since $u_i\sim u_h$ and $v_j\sim v_k$;
\item $\alpha\beta\in E(K_p*K_q)$, since $u_i\sim u_h$;
\end{itemize} 
Now, suppose $j=k$, in which case $i\neq h$. We get the same situation as before for the lexicographic product and for the co-normal product. Also, $\alpha\beta\in E(K_p\boxtimes K_q)$, since $u_i\sim u_h$ and $v_j=v_k$. Finally, suppose that $i=h$ and $j\neq k$. Then, $\alpha\beta\in E(K_p[K_q])\cap E(K_p\boxtimes K_q)$, since $u_i=u_h$ and $v_j\sim v_k$, while $\alpha\beta\in E(K_p*K_q)$ since $v_j\sim v_k$.
\end{proof}

By the proposition above, we get that the value $pq$ is in the b-spectrum of $G\odot H$ for every $p\in S_b(G)$ and $q\in S_b(H)$, the same being valid for the fall-spectrum. This and Lemmas~\ref{lem:bGdotH} and~\ref{lem:fallPar} give us the corollary below and part of Theorems~\ref{theo:mainb} and~\ref{theo:mainfall}. We mention that $b(G\odot H)\ge b(G)b(H)$ has been proved in~\cite{JP.15}, when $\odot$ is either the lexicographic product or the strong product. Our result generalizes theirs, and we mention that, if more is learned about $b(G\odot K_p)$,  then Corollary~\ref{cor:bspectrum} can actually produce better bounds than the ones given in Theorems~\ref{theo:mainb} and~\ref{theo:mainfall}. 

\begin{corollary}\label{cor:Lex}
Let $G,H$ be graphs, and $\odot$ be the lexicographic, strong or co-normal product. Also, let $T$ denote either $S_b$ or ${\cal F}$. Then,
\[\{pq\mid p\in T(G),q\in T(H)\}\subseteq T(G\odot H).\]
\end{corollary}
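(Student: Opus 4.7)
The plan is to combine Proposition~\ref{prop:Lex} with the spectrum containments already proved in Corollaries~\ref{cor:bspectrum} and~\ref{cor:FallSpectrum}, so the corollary reduces to a one-line bookkeeping argument in each of the two cases $T=S_b$ and $T={\cal F}$.

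First I would fix $p\in T(G)$ and $q\in T(H)$ and look at the product $K_p\odot K_q$. By Proposition~\ref{prop:Lex}, when $\odot$ is the lexicographic, strong, or co-normal product, this product is isomorphic to $K_{pq}$. A complete graph $K_n$ has, up to relabelling, exactly one proper coloring with $n$ colors, and that coloring is trivially a fall-coloring (every vertex sees every other color), hence also a b-coloring; and $K_n$ admits no proper coloring with more than $n$ colors. Consequently $S_b(K_{pq})={\cal F}(K_{pq})=\{pq\}$, so in both settings $pq$ lies in $T(K_p\odot K_q)$.

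Next I would invoke the appropriate spectrum-containment from the previous section. For $T=S_b$, Corollary~\ref{cor:bspectrum} gives
\[
\bigcup_{\substack{p\in S_b(G)\\q\in S_b(H)}} S_b(K_p\odot K_q)\ \subseteq\ S_b(G\odot H),
\]
and for $T={\cal F}$, Corollary~\ref{cor:FallSpectrum} gives the analogous inclusion for the fall-spectrum. Combined with the previous paragraph, each $pq$ with $p\in T(G)$ and $q\in T(H)$ belongs to $T(K_p\odot K_q)\subseteq T(G\odot H)$, which is exactly the stated inclusion.

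There is no real obstacle here: the work has already been done in Lemmas~\ref{lem:bhom} and~\ref{lem:dom} (which underlie the spectrum corollaries) and in Proposition~\ref{prop:Lex}. The only point that deserves an explicit word in the write-up is the remark that $S_b(K_n)={\cal F}(K_n)=\{n\}$, so that the inclusion $S_b(K_p\odot K_q)\subseteq S_b(G\odot H)$ (respectively the fall-spectrum version) actually delivers the single value $pq$, and not some weaker statement about smaller integers in the spectrum of $K_{pq}$.
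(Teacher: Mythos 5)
Your proposal is correct and matches the paper's own (very brief) justification: the paper derives this corollary exactly by combining Proposition~\ref{prop:Lex} with the spectrum containments of Corollaries~\ref{cor:bspectrum} and~\ref{cor:FallSpectrum}, using that $S_b(K_{pq})={\cal F}(K_{pq})=\{pq\}$. Your chaining of the two inclusions in Corollary~\ref{cor:bspectrum} (through the intersection and then the union) is valid, so there is nothing to add.
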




Now, we analyse the colorings of the cartesian products. The following proposition will be useful.

\begin{proposition}\label{prop:Cart}
Let $p,q$ be positive integers. Then \[{\cal F}(K_p\oblong K_q) = \max\{p,q\}.\]
\end{proposition}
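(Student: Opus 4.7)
The plan is to show that ${\cal F}(K_p\oblong K_q)$ is the singleton $\{\max\{p,q\}\}$; write $n=\max\{p,q\}$. I would view $V(K_p\oblong K_q)$ as a $p\times q$ grid in which each row (fibre of a vertex of $K_p$) induces a $K_q$ and each column (fibre of a vertex of $K_q$) induces a $K_p$, so that an independent set is exactly a partial system of distinct row and column representatives.

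For the existence of a fall-coloring with $n$ colors, I would give a Latin-rectangle construction. Assuming $p\ge q$ (the other case being symmetric), define $f(i,j)=((i+j)\bmod p)+1$. Each column receives all $p=n$ colors and each row receives $q$ distinct colors, so $f$ is proper; moreover, any vertex $(i,j)$ already sees every color different from $f(i,j)$ among the $p-1$ other vertices of its column, so $f$ is a fall-coloring.

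The main content is ruling out every other value of $k$. Let $f$ be any fall-coloring with $k$ colors; fix a color $c$, and let $R_c\subseteq V(K_p)$ and $S_c\subseteq V(K_q)$ denote the sets of rows, respectively columns, met by the color class $C_c$. Since $C_c$ is a partial permutation matrix, $|C_c|=|R_c|=|S_c|$. The decisive step is the dichotomy: for every color $c$, either $R_c=V(K_p)$ or $S_c=V(K_q)$. Indeed, if some row $i\notin R_c$ and some column $j\notin S_c$ both existed, then $(i,j)\notin C_c$, yet all of its neighbours lie in row $i$ or column $j$ and therefore avoid $C_c$, contradicting the fall-property at $(i,j)$.

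Combined with $|R_c|\le p$ and $|S_c|\le q$, the dichotomy forces $|C_c|=\min\{p,q\}$ for every $c$. Summing over all color classes gives $pq=k\cdot\min\{p,q\}$, whence $k=\max\{p,q\}=n$. The only place where the fall-hypothesis is really used is the dichotomy; everything else is just the observation that color classes of $K_p\oblong K_q$ are partial transversals together with a one-line double count. So the only anticipated obstacle is verifying the dichotomy cleanly -- a task made easy by picking the ``uncovered'' vertex $(i,j)$ whose very existence contradicts the fall-property.
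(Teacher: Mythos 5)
Your proof is correct. The existence half matches the paper's in substance: the paper colors the fibers of the larger factor with permutations of $\{1,\dots,\max\{p,q\}\}$ so that every vertex lies in a rainbow clique of that size, which is exactly what your cyclic Latin-rectangle $f(i,j)=((i+j)\bmod p)+1$ achieves more explicitly. The uniqueness half rests on the same key observation in both arguments --- a color class is a partial permutation matrix, and if it missed both a row $i$ and a column $j$ then the vertex $(i,j)$ would see that color nowhere in its neighborhood --- but you package it differently. The paper assumes $p\le q$, notes that any fall-coloring with $m\neq q$ colors must have $m>q$ (implicitly using $m\ge\chi=q$), picks a color absent from one row, and derives $p\ge q+1$, a contradiction. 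You instead state the clean dichotomy (every color class covers all rows or all columns), deduce $\lvert C_c\rvert=\min\{p,q\}$ for every color, and finish with the double count $pq=k\cdot\min\{p,q\}$. Your route buys a little: it computes $k=\max\{p,q\}$ directly without invoking the chromatic number of the product or splitting into cases by the sign of $m-q$, and the dichotomy lemma is a reusable statement in its own right. Both proofs are sound; yours is arguably the tidier write-up of the same underlying idea.
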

\begin{proof}
Consider $p\le q$, denote $K_p\oblong K_q$ by $G$, and write $V(K_p)$ as $\{u_1,\cdots,u_p\}$ and $V(K_q)$ as $\{v_1,\cdots,v_q\}$. First, we show that ${\cal F}(G)\neq \emptyset$. For this, let $f:V(G)\rightarrow \{1,\cdots,q\}$ be defined as follows: for every $j\in \{1,\cdots,q\}$, set $f((u_1,v_j))$ to $ j$; then color each subsequent $V(u_i)$ with a distinct chaotic permutation of $(1,\cdots,q)$ (there are enough permutations since $q\ge p$). Because every vertex is within a clique of size $q$, we get that $f$ is a fall-coloring. It remains to prove that no other fall-coloring exists. So, suppose by contradiction that $m\in {\cal F}(K_p\oblong K_q)\setminus\{q\}$, and let $f$ be a fall-coloring of $G$ with $m$ colors. Because $m>q$, there must exist a color $d$ that does not appear in $f(V(u_1))$. But since every vertex in $V(u_1)$ is a b-vertex and $V(u_i)$ is a clique for every $i\in \{1,\cdots,p\}$, this means that color $d$ must appear in $(u_{i_1},v_1),\cdots,(u_{i_q},v_q)$ for distinct values of $i_1,\cdots,i_q$, none of which can be~1. We get a contradiction since in this case we have $p\ge q+1$.
\end{proof}
We mention that the existence of a fall-coloring of $K_p\oblong K_q$ with $q$ colors has been first observed in~\cite{LL.09}, and that it already implies the part concerning the fall-spectrum in the lemma below. Nevertheless, Proposition~\ref{prop:Cart} tell us that, in order to get  bounds better than the one given by Theorem~\ref{theo:mainfall}, one needs to investigate ${\cal F}(G\oblong K_p)$ when $G$ is not the complete graph.

\begin{corollary}\label{cor:Cart}
Let $G,H$ be graphs. Then,
\[\{k\in S_b(G)\mid k\ge \chi(H)\}\cup \{k\in S_b(H)\mid k\ge \chi(G)\} \subseteq S_b(G\oblong H).\]
Also, if ${\cal F}\neq \emptyset$ and ${\cal F}(H)\neq$, then ${\cal F}(G\oblong H)\neq \emptyset$ and:
\[\{\max\{k,k'\}\mid k\in {\cal F}(G) \wedge k'\in {\cal F}(H)\}\subseteq {\cal F}(G\oblong H).\]
\end{corollary}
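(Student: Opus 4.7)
The plan is to handle the two parts separately. The fall-spectrum inclusion needs essentially no new work: Corollary~\ref{cor:FallSpectrum} gives
\[\bigcup_{k\in{\cal F}(G),\,k'\in{\cal F}(H)}{\cal F}(K_k\oblong K_{k'})\subseteq {\cal F}(G\oblong H),\]
and Proposition~\ref{prop:Cart} identifies the inner set as the singleton $\{\max\{k,k'\}\}$. So I would simply quote these two facts; together they give both ${\cal F}(G\oblong H)\neq\emptyset$ and the stated inclusion.

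For the b-spectrum part, Corollary~\ref{cor:bspectrum} is not enough, since it would force $k\in S_b(H)$ whereas we only assume $k\ge\chi(H)$. I would instead build a b-coloring by hand. Fix $k\in S_b(G)$ with $k\ge\chi(H)$, a b-coloring $f\colon V(G)\to\{0,\ldots,k-1\}$ of $G$ with $k$ colors, and any proper coloring $g\colon V(H)\to\{0,\ldots,k-1\}$ (which exists because $k\ge\chi(H)$). Define
\[F\colon V(G\oblong H)\to\{0,\ldots,k-1\},\qquad F(u,v)=f(u)+g(v)\bmod k.\]
Every edge of $G\oblong H$ fixes exactly one coordinate, so one of $f(u_a)-f(u_b)$ and $g(v_a)-g(v_b)$ vanishes while the other is a nonzero integer in $\{-(k-1),\ldots,k-1\}$; hence $F$ is proper. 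For the b-vertex condition, given a target color $c$, pick any $v\in V(H)$, set $c_2=g(v)$, and take a b-vertex $u^*$ of $f$ with color $c-c_2\bmod k$ (it exists because $f$ is a b-coloring that uses every color). Then $F(u^*,v)=c$, and the $G$-fiber through $(u^*,v)$ alone realizes every color in $\{0,\ldots,k-1\}\setminus\{c\}$ under $F$, so $(u^*,v)$ is a b-vertex for $c$. The symmetric case is identical with the roles of $G$ and $H$ swapped.

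The main conceptual step is the cyclic-shift trick $F(u,v)=f(u)+g(v)\bmod k$: it decouples the b-vertex condition, which must be supplied by $f$, from the second coordinate, for which only the properness of $g$ is available. Once this is spotted, the verification reduces to the standard two-type edge analysis for the cartesian product, and no delicate edge cases arise.
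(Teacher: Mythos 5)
Your proof is correct, and the fall-spectrum half is exactly the paper's argument: Corollary~\ref{cor:FallSpectrum} combined with Proposition~\ref{prop:Cart}. For the b-spectrum half, however, you dismiss Corollary~\ref{cor:bspectrum} too quickly: the paper does use it, not with $k\in S_b(H)$ but with $\chi(H)\in S_b(H)$, which always holds because every optimal coloring is a b-coloring. Concretely, since $k\ge\chi(H)$, Proposition~\ref{prop:Cart} gives $k\in{\cal F}(K_k\oblong K_{\chi(H)})\subseteq S_b(K_k\oblong K_{\chi(H)})$, and then Corollary~\ref{cor:bspectrum} applied to the pair $(k,\chi(H))\in S_b(G)\times S_b(H)$ yields $k\in S_b(G\oblong H)$. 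Your replacement --- the explicit coloring $F(u,v)=f(u)+g(v)\bmod k$ --- is a valid, self-contained alternative: the two-case edge analysis is sound, and the observation that the fiber of $v$ already realizes every color other than $c$ around a suitably shifted b-vertex of $f$ correctly establishes the b-vertex condition. What the cyclic-shift construction buys is independence from the homomorphism machinery of Section~\ref{sec:hom}; what it costs is that it re-proves, inside the cartesian product, essentially what Proposition~\ref{prop:Cart} already packages, since composing the b-homomorphisms $G\bhom K_k$ and $H\bhom K_{\chi(H)}$ with the cyclic fall-coloring $(i,j)\mapsto i+j\bmod k$ of $K_k\oblong K_{\chi(H)}$ is your map $F$ in disguise. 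Either route is fine; just retract the claim that the corollary ``is not enough.''
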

\begin{proof}
First, let $k\in S_b(G)$ be such that $k\ge \chi(H)$. By Proposition~\ref{prop:Cart}, we get $k\in S_b(K_k\oblong K_{\chi(H)})$. 
By Corollary~\ref{cor:bspectrum} and the fact that $\chi(H)\in S_b(H)$, we get that $k\in S_b(G\oblong H)$. Similarly, if $k\in S_b(H)$ is such that $k\ge \chi(G)$, we get that $k\in S_b(K_{\chi(G)}\oblong K_k)\subseteq S_b(G\oblong H)$.

Finally, let $k\in {\cal F}(G)$ and $k'\in {\cal F}(H)$. By Corollary~\ref{cor:FallSpectrum} and Proposition~\ref{prop:Cart}, we know that $\max\{k,k'\}\in {\cal F}(G\oblong H)$.
\end{proof}

In~\cite{KM.02}, the authors prove that $b(G\oblong H)\ge \max\{b(G),b(H)\}$. Observe that this also follows from the corollary above.

%

Regarding the direct product, in~\cite{JP.15} the authors observe that $b(G\times H)\ge \max\{b(G),b(H)\}$. Here, we give an alternate proof of this fact and show that when $G$ and $H$ are complete graphs, then there is equality. 
Our proof uses the following result.

\begin{theorem}\label{theo:KTV.02}\cite{KTV.02}
Let $G$ be isomorphic to the complete bipartite graph $K_{n,n}$ minus a perfect matching. Then $S_b(G) = \{2,n\}$.
\end{theorem}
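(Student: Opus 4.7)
The plan is to classify color classes of any proper coloring of $G$ using its rigid structure, then do a case analysis to pin down the admissible sizes. Writing $A=\{a_1,\ldots,a_n\}$ and $B=\{b_1,\ldots,b_n\}$ with $a_ib_i$ being the only missing edges across the bipartition, the first step is to observe that a color class is either (i) entirely contained in $A$, (ii) entirely contained in $B$, or (iii) equal to $\{a_i,b_i\}$ for some index $i$. This is because two non-adjacent vertices on opposite sides of the bipartition necessarily share the same index. Call (iii) \emph{mixed} and (i)--(ii) \emph{pure}.

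The realizability of $2$ and $n$ is easy. The bipartition itself is a proper $2$-coloring in which every vertex has a neighbor of the opposite color, so it is a b-coloring. The coloring $f(a_i)=f(b_i)=i$ is even a fall-coloring with $n$ colors, since $a_i$'s neighbors $\{b_j:j\ne i\}$ realize every color $\ne i$. Moreover $\delta(G)=n-1$, so any b-coloring uses at most $n$ colors.

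For the hard direction, suppose $f$ is a b-coloring with $k$ colors, let $I\subseteq\{1,\ldots,n\}$ index the mixed classes, and let $s_A,s_B$ count the pure $A$- and $B$-classes, so $k=|I|+s_A+s_B$. The crux observation is: if a pure $A$-class has b-vertex $a_p$, then the neighborhood $N(a_p)=B\setminus\{b_p\}$ must meet every color other than $a_p$'s. Since $B\setminus\{b_p\}\subseteq B$, no other color class may be entirely in $A$, forcing $s_A\le 1$; symmetrically $s_B\le 1$. Now case-split. If $s_A=s_B=0$, the mixed classes partition $V(G)$, so $|I|=n$ and $k=n$. If exactly one is nonzero, say $s_A=1,s_B=0$, then the vertices $\{b_i:i\notin I\}$ belong to no class, a contradiction. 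If $s_A=s_B=1$, then for any $i\in I$ the vertex $a_i$'s neighborhood lies in $B$ and misses the pure $A$-class, while $b_i$'s neighborhood lies in $A$ and misses the pure $B$-class; hence the mixed class $\{a_i,b_i\}$ has no b-vertex, forcing $|I|=0$ and $k=s_A+s_B=2$. The main obstacle is this final sub-case, which requires certifying that \emph{no} vertex of a mixed class can serve as a b-vertex once both a pure $A$- and a pure $B$-class are present. Combining the cases yields $S_b(G)\subseteq\{2,n\}$, closing the theorem.
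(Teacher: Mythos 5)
The paper does not prove this statement at all --- it is imported verbatim from the cited reference \cite{KTV.02} --- so there is no internal proof to compare against; your proposal supplies a self-contained argument, and it is essentially correct. The classification of color classes into pure $A$-classes, pure $B$-classes, and mixed classes $\{a_i,b_i\}$ is the right rigidity statement, the bounds $s_A\le 1$, $s_B\le 1$ via the location of a pure class's b-vertex are sound, and the three-way case split correctly forces $k\in\{2,n\}$. One spot needs a one-line patch: in the sub-case $s_A=1$, $s_B=0$, your stated contradiction (``the vertices $\{b_i:i\notin I\}$ belong to no class'') is vacuous when $I=\{1,\ldots,n\}$; in that situation the mixed classes already cover all of $V(G)$, so the contradiction is instead that the pure $A$-class is empty and hence cannot contain a b-vertex. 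With that sentence added, and the implicit standing assumption $n\ge 2$ made explicit (for $n=1$ the graph is $2K_1$ and the claimed set $\{2,n\}$ is wrong, but this is inherited from the statement, not your argument), the proof is complete. It is also worth noting that your argument in fact shows the $2$-coloring and the $n$-coloring are the \emph{only} b-colorings of $G$ up to relabeling, which is the stronger uniqueness fact the paper later reuses (and generalizes in its Theorem~\ref{theo:Tensor}) when it observes that these colorings are fall-colorings, so ${\cal F}(G)=\{2,n\}$ as well.
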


Observe that the graph in the above theorem is isomorphic to the graph $K_2\times K_n$. In fact, if $G$ is the graph in the theorem above, one can easily verify that the 2-coloring and the $n$-coloring of $G$, which are unique, are also fall-colorings. Therefore, we also have ${\cal F}(G) = \{2,n\}$. This particular fact has been generalized in~\cite{DHHJKLR.00}, where the authors prove that ${\cal F}(K_p\times K_q) = \{p,q\}$. In the next theorem, we generalize both results by proving that in fact $S_b(K_p\times K_q)=\{p,q\}$. We mention that in~\cite{DHHJKLR.00} the authors observe that the theorem below cannot be generalized to the direct product of more than two complete graphs. In particular, they give a fall-coloring with~6 colors of $K_2\times K_3\times K_4$. 

\begin{theorem}\label{theo:Tensor}
Let $p, q$ be integers greater than~1. Then, 
\[{\cal F}(K_p\times K_q) = S_b(K_p\times K_q) = \{p,q\}\]
\end{theorem}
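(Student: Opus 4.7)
I would prove the equality by showing the two inclusions $\{p,q\}\subseteq {\cal F}(K_p\times K_q)$ and $S_b(K_p\times K_q)\subseteq \{p,q\}$; combined with ${\cal F}(G)\subseteq S_b(G)$, these give the full claim. For the first, the projection $f\colon (u_i,v_j)\mapsto i$ is a proper $p$-coloring of $K_p\times K_q$ whose color classes are the rows $R_i=\{u_i\}\times V(K_q)$; every vertex $(u_i,v_j)$ has, for each $i'\ne i$, a neighbor $(u_{i'},v_{j'})$ of color $i'$ (any $j'\ne j$ works, using $q\ge 2$), so $f$ is a fall-coloring with $p$ colors, and the symmetric projection to the second coordinate yields one with $q$ colors.

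The reverse inclusion rests on the structural observation that two distinct vertices $(u_a,v_a),(u_b,v_b)$ of $K_p\times K_q$ are non-adjacent iff $u_a=u_b$ or $v_a=v_b$. A short case analysis then shows that every independent set of size at least $3$ lies entirely in a single row $R_i$ or in a single column $C_j=V(K_p)\times\{v_j\}$ (with this row/column unique), while independent sets of size $2$ still lie in some row or column. Consequently, in any proper coloring of $K_p\times K_q$ each color class is contained in a row or in a column. Given a b-coloring $f$ with $k$ colors and a b-vertex $v=(u_i,v_j)$ of a class $c$, every other class must contain a neighbor of $v$, i.e., a vertex outside $R_i\cup C_j$; hence no class other than $c$ is entirely contained in $R_i$ or in $C_j$, and so each row and each column contains at most one color class of $f$.

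I would then rule out singleton classes. Suppose $\{(u_i,v_j)\}$ is a singleton class of $f$: its b-vertex condition forces it to be the unique class entirely in $R_i$ and the unique one entirely in $C_j$. For every $j'\ne j$, the vertex $(u_i,v_{j'})$ must lie in some class of $f$, which cannot be contained in $R_i$, hence is contained in $C_{j'}$; so every column contains a class. For any $i'\ne i$, the vertex $(u_{i'},v_j)$ similarly lies in a class $A_{i'}\subseteq R_{i'}$. If $|A_{i'}|=1$ then $A_{i'}=\{(u_{i'},v_j)\}$, contradicting the uniqueness of $\{(u_i,v_j)\}$ as a class in $C_j$; if $|A_{i'}|\ge 2$, the b-vertex of $A_{i'}$ sits at some $(u_{i'},v_{j'})$ and forces $C_{j'}$ to contain no class, contradicting that every column contains a class. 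Hence $f$ has no singleton classes, and every class is contained in a unique row or a unique column. Writing $A$ and $B$ for the numbers of row-type and column-type classes, so $k=A+B$, and $I_R,J_C$ for the sets of rows and columns that contain a class, coverage of every vertex forces $I_R=[p]$ or $J_C=[q]$ (else a vertex $(u_{i'},v_{j'})$ with $i'\notin I_R$ and $j'\notin J_C$ would be uncolored). Moreover, a row-type class in $R_i$ with b-vertex $(u_i,v_{j_0})$ has $j_0\notin J_C$, so $A\ge 1$ implies $B\le q-1$; symmetrically $B\ge 1$ implies $A\le p-1$. Combining: $A=p$ forces $B=0$ and hence $k=p$, while $B=q$ forces $A=0$ and hence $k=q$, so $k\in\{p,q\}$.

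The step I expect to be the main obstacle is the singleton exclusion: the structural lemma on independent sets and the concluding counting are both routine, but the chain of implications triggered by a hypothetical singleton class must be combined carefully with the b-vertex constraints of other classes in order to close the contradiction.
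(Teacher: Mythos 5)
Your proof is correct, but it takes a genuinely different route from the paper's. The paper establishes the stronger claim that every b-coloring of $K_p\times K_q$ is either the row coloring or the column coloring, arguing by induction on $q$: the base case $q=2$ is imported from the known result (Theorem~\ref{theo:KTV.02}) that the b-spectrum of $K_{n,n}$ minus a perfect matching is $\{2,n\}$, and the inductive step splits on whether some column is monochromatic, with a somewhat heavier analysis of row colors versus column colors in the non-monochromatic case. You instead argue directly and without external input: the observation that non-adjacency in $K_p\times K_q$ is exactly agreement in one coordinate forces every color class into a single row or column; the b-vertex condition then shows that each row and each column hosts at most one class and lets you exclude singleton classes; and a short count of row-type versus column-type classes, together with the coverage condition, pins the number of colors at $p$ or $q$. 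Your argument is self-contained and non-inductive, at the price of not explicitly recording the uniqueness of the extremal colorings --- though that uniqueness does fall out of your final case analysis, since $A=p$ and $B=0$ forces every class to be a full row, and symmetrically for columns. The steps you flagged as delicate (the singleton exclusion and the concluding count) do close correctly, including the edge case in the singleton argument where the b-vertex of $A_{i'}$ lies in column $C_j$ itself.
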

\begin{proof}
Write $V(K_p)$ as $\{u_1,\cdots,u_p\}$ and $V(K_q)$ as $\{v_1,\cdots,v_q\}$. For each $i\in\{1,\cdots,q\}$, denote by $C_i$ the set $V(v_i)$ (the vertices in fiber $v_i$), and for each $i\in \{1,\cdots,p\}$, denote by $R_i$ the set $V(u_i)$ (the vertices in fiber $u_i$). Denote $K_p\times K_q$ by $G$. Note that the coloring $f$ obtained by assigning color $i$ to every vertex in $C_i$, for every $i\in \{1,\cdots, q\}$, is a b-coloring of $G$ with $q$ colors; we say that $f$ is the \emph{column coloring of $G$}. We define the \emph{row coloring of $G$} analogously. Next, we prove that if $f$ is a b-coloring of $G$, then $f$ is either the column coloring or the row coloring of $G$. Because these colorings are also fall-colorings, the theorem follows.

We prove by induction on $q$. If $q=2$, we know it holds by Theorem~\ref{theo:KTV.02}; so suppose that $q\ge 3$. Because $K_p\times K_q$ is  isomorphic to $K_q\times K_q$, we can also suppose that $q\le p$.
Note that for every color $d$ used in $f$, there must exist $i$ such that $f^{-1}(d)\subseteq C_i$ or $f^{-1}(d)\subseteq R_i$. If the former occurs, we say that $d$ is a column color, and that it is a row color otherwise. 

First, suppose that there exists $i$ such that every vertex in $C_i$ have the same color, say $d$. Let $G_i = G -C_i$, and $f_i$ be equal to $f$ restricted to $G_i$. Note that, because $C_i = f^{-1}(d)$, we get that $f_i$ is a b-coloring of $G_i$, and by induction hypothesis, it is either the row or the column coloring of $G_i$. If $f_i$ is the column coloring, then we are done since it follows that $f$ is the column coloring of $G$. So, $f_i$ must be the row coloring, in which case we can suppose that for each $j\in \{1,\cdots,p\}$ we have that $f((u_j,v_\ell)) = j$ for every $\ell\in \{1,\cdots, q\}\setminus \{i\}$. But observe that, for each $j\in\{1,\cdots,p\}$, we have that $(u_j,v_i)$ misses color $j$, hence $(u_j,v_i)$ cannot be a b-vertex of color $d$. We get a contradiction since $f^{-1}(d) = C_i$.
Therefore, we can suppose that no column is monochromatic. 

Now, for each $i\in\{1,\cdots,p\}$, denote by $d_i$ the color $d$ such that $\lvert f^{-1}(d) \cap R_i\rvert >1$, if it exists. Denote by $R^*$ the set of row indices for which $d_i$ exists. Note that at most one such color exists per row as otherwise, if two colors are contained only in row $i$, then their vertices would be mutually non-adjacent and hence the colors would have no b-vertices. Similarly, each column $j$ contains at most one column color. 
For each $i\in R^*$, denote by $C_i$ the set $\{j\in \{1,\cdots,q\}\mid f((u_i,v_j)) = d_i\}$ (columns where $d_i$ appears). Finally, let $C^* = \bigcap_{i\in R^*}C_i$.  We first prove the following important facts:

\begin{enumerate}
\item\label{1} $R^*\neq \emptyset$: it follows because no column is monochromatic and no two column colors can be contained in the same column; 

\item\label{2} If $(u_i,v_j)$ is a b-vertex of color $d_i$, then $j\in C^*$: suppose otherwise, and let $i'\in R^*$ be such that $j\notin C_{i'}$; such index must exist since $j\notin C^*$. Let $j'\in C_{i'}$ be such that $(u_{i'},v_{j'})$ is a b-vertex of color $d_{i'}$; it must exist since $f^{-1}(d_{i'})\subseteq R_{i'}$. Finally, let $d=f((u_{i'},v_j))$. By the choice of $i'$, we know that $j'\neq j$, which implies $d\neq d_{i'}$ (recall that $j\notin C_i$). Furthermore, since $f^{-1}(d_i)\subseteq R_i$ and $i\neq i'$, we get $d\neq d_i$. Finally, since $R_{i'}$ can contain at most one row column, we get that $d$ is a column color. This implies that $(u_i,v_j)$ is not adjacent to color $d$, a contradiction.
\end{enumerate}

Now, without loss of generality, suppose that $R^*=\{1,\cdots, p'\}$ and that $C^*=\{1,\cdots,q'\}$. By~(1), we know that $q'\ge 1$. First, suppose that $p'<p$. By definition, we know that each color appears at most once in $R_i$ for every $i\in \{p'+1,\cdots, p\}$. This means that, for each $j\in C^*$, vertex $(u_1,v_j)$ is not adjacent to color $f((u_{p'+1},v_j))$, a contradiction since in this case, by~(2), color $d_1$ does not have b-vertices. Therefore, we have that $p'=p$. Now, suppose that $q'<q$. By the choice of $q'$, observe that there must exist a color $d\notin \{d_1,\cdots,d_p\}$ such that $f^{-1}(d)\subseteq C_{q'+1}$. Let $D = \{i\in\{1,\cdots,p\}\mid f((u_i,v_{q'+1})) = d\}$ (vertices in fiber $v_{q'+1}$ colored with $d$). Note that for each $i\in D$, we get that vertex $(u_i,v_{q'+1})$ is not adjacent to color $d_i$, a contradiction since in this case color $d$ has no b-vertices. Therefore, we have that $q'=q$, in which case $f$ is the row b-coloring of $G$.
\end{proof}


\begin{corollary}\label{cor:Direct}
Let $G,H$ be graphs. Then, 
\[S_b(G)\cup S_b(H) \subseteq S_b(G\times H)\]
Furthermore, if ${\cal F}(G)\neq \emptyset$ and ${\cal F}(H)\neq \emptyset$, then
\[{\cal F}(G)\cup {\cal F}(H) \subseteq {\cal F}(G\times H).\]
\end{corollary}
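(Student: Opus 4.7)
The plan is to mirror the structure of Corollary~\ref{cor:Cart}, with Theorem~\ref{theo:Tensor} playing the role that Proposition~\ref{prop:Cart} played there: a complete description of the b-spectrum and fall-spectrum of a product of two complete graphs. Once one knows $S_b(K_p\times K_q)$ and ${\cal F}(K_p\times K_q)$ completely, the spectrum-transfer Corollaries~\ref{cor:bspectrum} and~\ref{cor:FallSpectrum} immediately carry these elements into $S_b(G\times H)$ and ${\cal F}(G\times H)$.

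For the b-spectrum inclusion, first I would fix $k\in S_b(G)$ and pick any $k'\in S_b(H)$, which is non-empty since $\chi(H)\in S_b(H)$. Assuming $k,k'\ge 2$, Theorem~\ref{theo:Tensor} gives $S_b(K_k\times K_{k'}) = \{k,k'\}$, so in particular $k\in S_b(K_k\times K_{k'})$. Corollary~\ref{cor:bspectrum} then yields $k\in S_b(K_k\times K_{k'})\subseteq S_b(G\times H)$. The symmetric argument, starting instead with $k\in S_b(H)$ and picking any $k'\in S_b(G)$, handles the other half of the union.

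For the fall-spectrum inclusion, the argument is identical in form, swapping ${\cal F}$ for $S_b$ and Corollary~\ref{cor:FallSpectrum} for Corollary~\ref{cor:bspectrum}; the hypothesis ${\cal F}(G)\neq\emptyset$ and ${\cal F}(H)\neq\emptyset$ is precisely what is needed to pick elements $k\in{\cal F}(G)$ and $k'\in{\cal F}(H)$ to feed into Theorem~\ref{theo:Tensor}.

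There is no real obstacle: once Theorem~\ref{theo:Tensor} is established, the remainder is a direct application of the homomorphism machinery from Section~\ref{sec:hom}. The only minor bookkeeping point is the degenerate case $k=1$ (or $k'=1$), which is excluded from Theorem~\ref{theo:Tensor}. In that situation $G$ is edgeless, hence so is $G\times H$, and $1\in S_b(G\times H)$ follows trivially; the analogous observation takes care of ${\cal F}$.
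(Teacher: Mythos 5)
Your argument is essentially the paper's own proof: both apply Theorem~\ref{theo:Tensor} to identify $S_b(K_k\times K_{k'})=\{k,k'\}$ (resp.\ ${\cal F}$) and then push these values into $S_b(G\times H)$ via Corollary~\ref{cor:bspectrum} (resp.\ Corollary~\ref{cor:FallSpectrum}); the paper just phrases it as one union over all pairs $(p,q)$ rather than fixing $k$ and a witness $k'$. Your aside on the case $k=1$ is more care than the paper takes, though note that if $G$ is edgeless the product $G\times H$ is edgeless too, so elements $k'\ge 2$ of $S_b(H)$ cannot lie in $S_b(G\times H)$ --- the statement itself implicitly assumes both factors have edges, and neither your patch nor the paper's proof covers that degenerate situation.
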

\begin{proof}
By Corollary~\ref{cor:bspectrum}, we know that for every $p\in S_b(G)$ and $q\in S_b(H)$, we have that $S_b(K_p\times K_q)$ is contained in $S_b(G\times H)$. And by Theorem~\ref{theo:Tensor} and the fact that $S_b(F)\neq \emptyset$, for every graph $F$, we get that 
\[\bigcup_{\substack{p\in S_b(G)\\q\in S_b(H)}}S_b(K_p\times K_q) = \bigcup_{\substack{p\in S_b(G)\\q\in S_b(H)}}\{p,q\} = S_b(G)\cup S_b(H).\]

By Corollary~\ref{cor:FallSpectrum}, the same argument can be applied for fall-colorings as long as the product $K_p\times K_q$ is defined for some value of $p$ and some value of $q$, i.e., as long as ${\cal F}(G)\neq \emptyset$ and ${\cal F}(H)\neq \emptyset$.
\end{proof}

Observe that the first part of Theorems~\ref{theo:mainb} and~\ref{theo:mainfall} are given by Corollary~\ref{cor:Lex}, while the cartesian product and the direct product parts are given by Corollaries~\ref{cor:Cart} and~\ref{cor:Direct}, respectively.


\section{Fall coloring and products of general graphs}\label{sec:fallPart}

We have seen that ${\cal F}(G\odot H)\neq \emptyset$ whenever ${\cal F}(G)\neq \emptyset$ and ${\cal F}(H)\neq \emptyset$, but what happens when one of these sets is empty? Next, we show some situations where we still can obtain a fall-coloring of the product, even though one of the fall-spectra might be empty.

\begin{theorem}
Let $G,H$ be graphs, and suppose that ${\cal F}(G)\neq \emptyset$. Then 
\[\{k\in {\cal F}(G)\mid k\ge \chi(H)\}\subseteq {\cal F}(G\oblong H).\]
\end{theorem}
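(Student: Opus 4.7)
The plan is to give an explicit construction: combine a fall-coloring of $G$ with a proper coloring of $H$ via addition in $\mathbb{Z}/k\mathbb{Z}$. Let $k\in{\cal F}(G)$ with $k\ge\chi(H)$. Fix a fall-coloring $f:V(G)\to\{0,1,\ldots,k-1\}$ of $G$ with $k$ colors, and a proper coloring $g:V(H)\to\{0,1,\ldots,k-1\}$ of $H$ (which exists since $\chi(H)\le k$). Define $F:V(G\oblong H)\to\{0,1,\ldots,k-1\}$ by
\[F(u,v)=f(u)+g(v)\pmod k.\]

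Then I would verify the three required properties of $F$ in turn. First, $F$ is proper: an edge of $G\oblong H$ is either of the form $(u,v_a)(u,v_b)$ with $v_av_b\in E(H)$ — in which case $F(u,v_a)-F(u,v_b)=g(v_a)-g(v_b)\not\equiv 0\pmod k$ because $g$ is proper — or of the form $(u_a,v)(u_b,v)$ with $u_au_b\in E(G)$, in which case $F(u_a,v)-F(u_b,v)=f(u_a)-f(u_b)\not\equiv 0\pmod k$ since $f$ is proper. Second, $F$ is surjective: for any fixed $v\in V(H)$, the surjectivity of $f$ gives $\{F(u,v):u\in V(G)\}=\{0,1,\ldots,k-1\}$.

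Third, and this is the heart, every vertex $(u,v)$ is a b-vertex in $F$. Let $c=F(u,v)$ and pick any color $d\ne c$. Because $f$ is a fall-coloring of $G$, the vertex $u$ has a neighbour $u'\in N_G(u)$ with $f(u')\equiv d-g(v)\pmod k$; then $(u',v)$ is a neighbour of $(u,v)$ in $G\oblong H$ with $F(u',v)=d$. Thus $(u,v)$ sees every color $d\ne c$ among its $G$-neighbours alone, so it is a b-vertex. Hence $F$ is a fall-coloring of $G\oblong H$ with $k$ colors, showing $k\in{\cal F}(G\oblong H)$.

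I do not expect any real obstacle here — the construction reduces the fall-coloring condition on $G\oblong H$ to the fall-coloring condition on $G$, and the proper coloring of $H$ is only needed to ensure that $F$ remains proper on the $H$-fibers. The slight care needed is simply to run all arithmetic in $\mathbb{Z}/k\mathbb{Z}$ so that the shift $+g(v)$ is a permutation of the color set, making the b-vertex check immediate from the fall-coloring property of $f$.
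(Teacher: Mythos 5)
Your construction is exactly the paper's: the paper colors the copy of $G$ over each $v\in g^{-1}(i)$ with the cyclic permutation $\pi_i=(i,i+1,\dots,k,1,\dots,i-1)$ of the fall-coloring $f$, which is precisely your shift $F(u,v)=f(u)+g(v)\bmod k$. Your writeup is correct and, if anything, slightly more careful than the paper's, since you explicitly verify properness on the $H$-fibers (that $g(v_a)-g(v_b)\not\equiv 0\pmod k$ when $v_av_b\in E(H)$), a point the paper leaves implicit.
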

\begin{proof}
Let $f$ be any fall-coloring of $G$ that uses colors $\{1,\cdots,k\}$, where $k\ge \chi(H)$, and consider an optimal coloring $g$ of $H$ that uses colors $\{1,\cdots,\ell\}$. Then, for each $i\in\{1,\cdots,\ell\}$, let $\pi_i$ denote the permutation \[(i,i+1,\cdots,k,1,\cdots,i-1).\] Note that $\pi_i$ is well defined since $k\ge \ell$. Finally, for each $u\in g^{-1}(i)$,  color the copy of $G$ related to $u$ by using $f$ where the colors are permuted as in $\pi_i$. Let $h$ be the obtained coloring. 
Because $h$ restricted to each copy of $G$ is nothing more than a permutation of the colors used in $f$, we get that every vertex is still a b-vertex. 
\end{proof}

\begin{theorem}
Let $G,H$ be graphs and suppose that ${\cal F}(G)\neq \emptyset$ and $\chi(G)>1$. Then ${\cal F}(G)\subseteq {\cal F}(G\times H)$ if and only if $H$ has no isolated vertices.
\end{theorem}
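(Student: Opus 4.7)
The plan is to handle each implication separately and directly, since both directions reduce to a simple observation about the edge condition $u_a \sim u_b \wedge v_a \sim v_b$ of the direct product.

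For the backward direction, assume $H$ has no isolated vertices. Given $k \in {\cal F}(G)$ with fall-coloring $f : V(G) \to \{1,\ldots,k\}$, the plan is to lift $f$ along the first projection: define $g : V(G \times H) \to \{1,\ldots,k\}$ by $g((u,v)) = f(u)$, and verify three properties. First, $g$ is proper because every edge $(u,v)(u',v')$ of $G\times H$ forces $uu' \in E(G)$, hence $f(u) \neq f(u')$. Second, $g$ is surjective onto $\{1,\ldots,k\}$ since $V(H)$ is nonempty and $f$ uses all $k$ colors. Third (the main content), every vertex $(u,v)$ is a b-vertex: given any color $c \neq f(u)$, the fall-coloring $f$ supplies $u' \in N_G(u)$ with $f(u') = c$, and the no-isolated-vertex hypothesis on $H$ supplies some $v' \in N_H(v)$, so $(u',v') \in N_{G\times H}((u,v))$ with $g((u',v')) = c$.

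For the forward direction, the plan is to argue the contrapositive. Suppose $v_0$ is isolated in $H$; then for every $u \in V(G)$ the vertex $(u,v_0)$ is isolated in $G\times H$, because the direct-product edge condition requires $v_0$ to have a neighbor in $H$. Fix any $k \in {\cal F}(G)$, which exists by hypothesis, and observe that $k \geq \chi(G) \geq 2$. In any proper coloring of $G\times H$ using $k \geq 2$ colors, an isolated vertex cannot be a b-vertex since there is at least one other color class to which it is not adjacent. Hence $k \notin {\cal F}(G\times H)$, and so ${\cal F}(G) \not\subseteq {\cal F}(G\times H)$.

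The only subtle point is the role of the assumption $\chi(G) > 1$ in the forward direction: it guarantees every $k \in {\cal F}(G)$ satisfies $k \geq 2$, which is exactly what is needed to conclude that isolated vertices of $G\times H$ obstruct membership in ${\cal F}(G\times H)$. Without this hypothesis the degenerate value $k=1$ would have to be treated separately (an edgeless $G$ would admit a $1$-color fall-coloring, and $1$ could lie in ${\cal F}(G\times H)$ even when $H$ has isolated vertices, provided $G\times H$ itself is edgeless). Apart from this bookkeeping, I do not foresee any genuine obstacle; both directions are essentially immediate from the definition of the direct product and of a b-vertex, and the backward direction can alternatively be phrased as observing that $f \circ \pi_G$ is a Type II homomorphism from $G\times H$ to $K_k$.
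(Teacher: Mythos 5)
Your proof is correct and follows essentially the same route as the paper: the backward direction lifts the fall-coloring of $G$ through the first projection and uses a neighbor of $v$ in $H$ to witness every color class, and the forward direction observes that isolated vertices of $H$ yield isolated vertices of $G\times H$, which cannot be b-vertices once $\chi(G)\ge 2$ forces at least two colors. Your explicit handling of the $k=1$ degenerate case is in fact slightly more careful than the paper's statement that ${\cal F}(G\times H)=\emptyset$, which would fail if $H$ were edgeless but nonempty, though the theorem's conclusion still holds there.
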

\begin{proof}
If $H$ has isolated vertices, so does $G\times H$, and since $\chi(G)\ge 2$, these vertices can never be b-vertices; hence ${\cal F}(G\times H) = \emptyset$. Now, let $f$ be a fall-coloring of $G$ with $k$ colors, and let $g:V(G\times H)\rightarrow \{1,\cdots,k\}$ be defined  as $g((u,v)) = f(u)$, for every $(u,v)\in V(G\times H)$. Consider a color $i$; because $f^{-1}(i)$ is a stable set, as well as  $V(u)$ for every $u\in f^{-1}(i)$, we get that $g^{-1}(i)$ is also a stable set (i.e., $g$ is a proper coloring). Now, consider a vertex $(u,v)\in V(G\times H)$. Since $H$ has no isolated vertices, we get that $v$ must have some neighbor, say $v'$. By definition, we know that $S = \{(u',v')\in V(G\times H)\mid u'\in N(u)\}$ is contained in $N((u,v))$, and because $u$ is a b-vertex in $f$, we know that $f(N(u)) = g(S) = \{1,\cdots,k\}\setminus \{f(u)\}$. It follows that $(u,v)$ is a b-vertex in $g$.
\end{proof}

\section{Conclusion}\label{sec:conclusion}



We have seen that the spectrum of products involving complete graphs play an important role in better understanding the spectrum of general graphs. Then, we set out to investigate the main graph products and have seen that: ${\cal F}(K_p\boxtimes K_q) = S_b(K_p\boxtimes K_q) = \{pq\}$ (Proposition~\ref{prop:Lex});  ${\cal F}(K_p\oblong K_q) = \max\{p,q\}$ (Proposition~\ref{prop:Cart}); and ${\cal F}(K_p\times K_q) = S_b(K_p\times K_q) = \{p,q\}$ (Theorem~\ref{theo:Tensor}). Therefore, the only not completely described set is $S_b(K_p\oblong K_q)$.
This however seems to be a much harder problem, as hinted by the results presented in~\cite{JO.12}. There, the authors show that $b(K_n\oblong K_n)\ge 2n-3$ and they conjecture that this is best possible. However, their conjecture does not hold, as can be seen in~\cite{MB.15}. Nonetheless, following their result, we pose the question below.

\begin{question}
Let $p$ and $q$ be positive integers. Does the following hold? \[b(K_p\oblong K_q) \ge p+q-3.\]
\end{question}

We mention that in~\cite{KM.02}, the authors prove that $b(G\oblong H)\ge b(G)+b(H)-1$ under certain conditions. Note that if the answer to the above question is ``yes'', then Corollary~\ref{cor:bspectrum} implies $b(G\oblong H) \ge b(G)+b(H)-3$. This would considerably improve previous results, since the conditions for $b(G\oblong H)\ge b(G)+b(H)-1$ in~\cite{KM.02} are quite strong.

Concerning the existence of fall-colorings, we have seen that ${\cal F}(G\odot H)\neq \emptyset$ whenever ${\cal F}(G)\neq \emptyset$ and ${\cal F}(H)\neq \emptyset$. We also have seen that under some conditions ${\cal F}(G\oblong H)\neq \emptyset$ and ${\cal F}(G\times H)\neq \emptyset$ when ${\cal F}(G)\neq \emptyset$. 
In~\cite{S.15_2}, the authors observe that the graph $C_5[K_2]$ has a fall-coloring, while we know that $C_5$ has no fall-colorings. Because of the next proposition, we get that $C_5[K_2] \cong C_5\boxtimes K_2$. 

\begin{proposition}
Let $G$ and $H$ be graphs. Then, $G\boxtimes H\subseteq G[H]$, with equality when $H$ is the complete graph.
\end{proposition}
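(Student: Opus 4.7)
The plan is simply to unfold the definitions of the strong product and the lexicographic product from Table~\ref{tab:products} and do a short case analysis on the basic formulas that witness each edge. Since both products have the same vertex set $V(G)\times V(H)$, the claim reduces to comparing edge sets.

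For the inclusion $E(G\boxtimes H)\subseteq E(G[H])$, I would take an arbitrary edge $(u_a,v_a)(u_b,v_b)\in E(G\boxtimes H)$ and split on which of the three clauses defining $\boxtimes$ holds. If $u_a=u_b$ and $v_a\sim v_b$, the second clause of the lexicographic product applies directly. Otherwise we have $u_a\sim u_b$ (in both remaining cases), which is already the first clause of $G[H]$, so the edge belongs to $G[H]$ regardless of the relation between $v_a$ and $v_b$.

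For the reverse inclusion when $H=K_n$, I would again do a case split, this time on the defining clauses of $G[H]$. If the edge satisfies $u_a=u_b$ and $v_a\sim v_b$, that is literally the first clause of $\boxtimes$. If instead $u_a\sim u_b$, then $u_a\neq u_b$ (since $G$ is simple), so the ordered pairs $(u_a,v_a)$ and $(u_b,v_b)$ being distinct imposes no constraint on $v_a,v_b$; either $v_a=v_b$, giving the second clause of $\boxtimes$, or $v_a\neq v_b$, in which case the completeness of $H=K_n$ forces $v_a\sim v_b$, giving the third clause of $\boxtimes$. In every case the edge is in $G\boxtimes K_n$.

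There is no real obstacle here, just bookkeeping over the three basic formulas involved; the only slightly non-obvious point is noticing that the completeness hypothesis on $H$ is exactly what is needed to promote the ``$u_a\sim u_b$'' clause of the lexicographic product to the ``$u_a\sim u_b$ and $v_a\sim v_b$'' clause of the strong product when $v_a\neq v_b$. A counterexample for the general case (taking $H=\overline{K_2}$ and any $G$ with an edge) would justify why the equality requires $H$ complete, but this is not needed for the stated proposition.
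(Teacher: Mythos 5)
Your proof is correct and follows essentially the same route as the paper's: both arguments unfold the defining clauses from Table~\ref{tab:products} and do the same case split, with the key observation in the reverse direction being that completeness of $H$ supplies $v_a\sim v_b$ whenever $v_a\neq v_b$. No substantive difference to report.
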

\begin{proof}
First, let $e=(u_a,v_a)(u_b,v_b)\in E(G\boxtimes H)$. If the first condition of the definition of strong product holds, we trivially get that $e\in G[H]$; therefore one of the other two conditions hold, i.e., we have $u_au_b\in E(G)$ and, again, we get $e\in G[H]$.

Now, suppose that $H$ is the complete graph and let $e=(u_a,v_a)(u_b,v_b)\in E(G[H])$. As before, if the second condition of the definition of the lexicographic product is also one of the conditions in the strong product. So, we can consider that $u_au_b\in E(G)$. In this case, either $v_a=v_b$ in which case $e$ is also in $G\boxtimes H$, or $v_a\neq v_b$ in which case $e$ is in $G\boxtimes K_p$ since $H$ is complete.
\end{proof}

Therefore, for all of these products there are cases where ${\cal F}(G\odot H)\neq \emptyset$ even thought one of ${\cal F}(G)$ and ${\cal F}(H)$ is empty. Therefore, a good question is what happens when both these sets is empty. 
In~\cite{S.15_2}, they prove that if ${\cal F}(H) = \emptyset$, then ${\cal F}(G[H])=\emptyset$. This means that the answer to the following question is ``no'' for the lexicographic product.

\begin{question}\label{question:emptyset}
Let $\odot$ be an adjacency product. Does there exist graphs $G$ and $H$ such that ${\cal F}(G)=\emptyset$, ${\cal F}(H)=\emptyset$, and ${\cal F}(G\odot H)\neq \emptyset$? 
\end{question}

%
%

Finally, we present an example that answers in the negative the following question, posed by Kaul and Mitillos. 

\begin{question}\label{conj:KM}\cite{KM.16}
Does the following hold whenever $G$ is a perfect graph?
\[\chi(G)=\delta(G)+1 \Leftrightarrow {\cal F}(G) = \{\chi(G)\}.\]
\end{question}

Observe that if $G$ is a chordal graph, then $\omega(G)\ge \delta(G)+1$. Also, recall that if ${\cal F}(G)\neq \emptyset$, then $\chi(G)\le \fa(G)\le \delta(G)+1$. Therefore, we know that if $G$ is chordal and ${\cal F}(G)\neq \emptyset$ then ${\cal F}(G)=\{\chi(G)\}=\{\delta(G)+1\}$, i.e., the necessary part of the question holds for chordal graphs. However, as we show in the next paragraph, the sufficient part does not always hold for chordal graphs.

Let $G_1$ be constructed as follows: start with a path $P=(v_1,\cdots,v_6)$ of size~6; add a vertex $u$ and edges between $u$ and each $v_i$ in $P$; add a peding clique of size~6 adjacent to $v_i$, for every $i\in \{1,2,5,6\}$. Now, let $G_2$ be obtained as follows: start with a clique $C=\{u_1,u_2,u_3,u_4\}$ of size~4; add vertices $x$ and $y$ and edges $\{xy,xu_1,xu_2,yu_1,yu_2\}$; add $z_1$ adjacent to $u_1$ and $u_3$; add $z_2$ adjacent to $u_2$ and $u_4$; then, for every vertex $v\in\{z_1,z_2,x,y\}$, add a pending clique of size~6 adjacent to $v$. Finally, let $G$ be obtained from $G_1$ and $G_2$ by glueing the edges $v_3v_4$ and $u_3u_4$. 
It is not hard to see that $G$ is a chordal graph, since it can be obtained from cliques by glueing them along an edge or along a vertex. Observe that $\delta(G)= 6$ and that $\omega(G)=7$; hence $\chi(G)=\delta(G)+1$. We show that ${\cal F}(G)=\emptyset$. Let $c$ be any optimal coloring of $G$, and suppose that $u,u_1,u_2$ are b-vertices in $c$. We prove that $v_3,v_4$ cannot be both b-vertices; this implies our claim. Note that $u,u_1,u_2$ all have degree exactly~6, which means that every vertex in their neighborhoods must have distinct colors. Therefore, we get $c(v_2)\neq c(v_5)$, and since $N(u_1)\setminus N[u_2] = \{z_1\}$ and $N(u_2)\setminus N[u_1] = \{z_2\}$, we get $c(z_1) = c(z_2)$. Denote by $i$ the color of $z_1$. But now, $\{c(v_2),i\} = c(N(v_3))\setminus c(N[v_4]) \neq c(N(v_4))\setminus c(N[v_3]) = \{c(v_5),i\} $, which cannot hold when $v_3$ and $v_4$ are both b-vertices. 

Nevertheless, one can still ask about the maximal subclasses of the perfect graphs for which the answer is ``yes''. For instance, it has been proved to hold for threshold graphs and split graphs~\cite{KM.16}, and for strongly chordal graphs~\cite{LDL.05} In particular, we pose the following question.

\begin{question}
Can one decide in polynomial time whether a chordal graph $G$ is such that ${\cal F}(G)\neq \emptyset$?
\end{question}


\bibliographystyle{plain}
\bibliography{../refs}
\end{document}